\documentclass[12pt,a4paper]{amsart}

\usepackage{paralist, mathdots} 

\usepackage{amssymb} 
\usepackage{amsmath,tikz,wasysym,multirow,enumitem,lscape,multirow,subfigure} 
\usepackage{hyperref} 
\usepackage{amsthm}
\usepackage{mathtools}
\usepackage{soul}
\usepackage{cite}
\usepackage{enumitem}

\textwidth 14.5cm
\oddsidemargin 1cm

\tikzset{every node/.style={draw, circle, inner sep=2pt},
every picture/.append style={thick,scale=0.8},
every label/.style={draw=none, rectangle}}

\input xy
\xyoption{all}

\usepackage[all]{xy}

\newcommand{\bfb}{{\bf b}}
\newcommand{\bfx}{{\bf x}}
\newcommand{\bfy}{{\bf y}}
\newcommand{\bfv}{{\bf v}}
\newcommand{\calS}{{\mathcal S}}

\newcommand{\bN}{\mathbb{N}}

\newcommand{\Sc}{\overline{{\mathcal S}_0}}
\newcommand{\gssp}{{\mathcal G}^{\text{SSP}}}
\newcommand{\trans}{^\top}
\newcommand{\bR}{{\mathbb R}}

\DeclareMathOperator{\supp}{supp}

\DeclareMathOperator{\vecop}{vec}

 \usepackage{graphicx}
 \usepackage{tikz}
 \usetikzlibrary{positioning}
  \tikzset{main node/.style={circle,fill=red!10,draw,minimum size=0.2cm,inner sep=0pt},
             }

\newtheorem{theorem}{Theorem}[section]

\newtheorem{lemma}[theorem]{Lemma}

\theoremstyle{definition}
\newtheorem{example}[theorem]{Example}
\newtheorem{definition}[theorem]{Definition}
\newtheorem{remark}[theorem]{Remark}

\newtheorem{question}[theorem]{Question}

\title{The strong spectral property for some families of unicyclic graphs}

\author{Sara Koljan\v{c}i\'{c}}
\address[S. Koljan\v{c}i\'{c}]{Faculty of Natural Sciences and Mathematics, University of Banja Luka, Mladena Stojanovi\' ca 2, 78000 Banja Luka, Bosnia and Herzegovina}
\email{sara.jevdjenic@pmf.unibl.org}

\author{Polona Oblak}
\address[P.~Oblak]{Faculty of Computer and Information Science, University of Ljubljana, Ve\v cna pot 113, 1000 Ljubljana, Slovenia; Faculty of Mathematics and Physics, University of Ljubljana and Institute of Mathematics, Physics, and Mechanics, Jadranska ulica 19, 1000 Ljubljana, Slovenia}
\email{polona.oblak@fri.uni-lj.si}

\thanks{The authors acknowledge the financial support from the bilateral projects no.~BI-BA/19-20-044 and no.~BI-BA-24-25-024 funded by the Slovenian Research Agency and Ministry of Civil Affairs, Bosnia and Herzegovina. Polona Oblak has received funding from the Slovenian Research Agency, research core funding no.~P1-0222.}

\date{\today}

\begin{document}

\maketitle
\begin{abstract}
To find all the possible spectra of all real symmetric matrices whose off-diagonal pattern is prescribed by the adjacencies of a given graph $G$, the Strong Spectral Property turned out to be of crucial importance.
In particular, we investigate the set $\gssp$ of all simple graphs $G$ with the property that each symmetric matrix of the pattern of $G$ has the Strong Spectral Property. 
In this paper, we completely characterize unicyclic graphs of girth three in $\gssp$. We prove that any tadpole graph of girth at most five is in $\gssp$ and we show that the same is not valid for girth six tadpole graphs. 
\end{abstract}  

\bigskip

\noindent{\bf Keywords:} Symmetric matrix; Strong spectral property; Inverse eigenvalue problem; % Matrix Liberation Lemma.

\noindent{\bf AMS subject classifications:}
%05C38, %%Paths and cycles
05C50, %%Graphs and linear algebra (matrices, eigenvalues, etc.)
05C38  %%	Paths and cycles
% 05C57, %%Games on graphs
%05C75, %%Structural characterization of families of graphs
% 05C83, %%Graph minors
%05C85, %%Graph algorithms
% 15A03, %%Vector spaces, linear dependence, rank
15A18, %%Eigenvalues, singular values, and eigenvectors
% 15A29, %%Inverse problems
%15B35, %%Sign pattern matrices
15B57, %%Hermitian, skew-Hermitian, and related matrices
%58C15, %%Implicit function theorems; global Newton methods
65F18. %%Inverse eigenvalue problems
%68R10. %%Graph theory (including graph drawing)

\bigskip

\section{Introduction}

The importance of matrices that satisfy the Strong Spectral Property is well recognized, particularly in their applications to the Inverse Eigenvalue Problem for a graph (IEP-$G$). For a given connected simple graph $G$ with $n$ vertices, the IEP-$G$ wants to find all possible spectra of symmetric matrices having the same zero-nonzero off-diagonal pattern as the adjacency matrix of $G$. Formally, let $\calS(G)$ denote the set of all real symmetric $n \times n$ matrices $A=[a_{ij}]$ such that for $i\ne j$ an element $a_{ij}$ is nonzero if and only if $\{i, j\}\in E(G)$, with no restriction to the diagonal entries of $A$. To resolve the IEP-$G$ for a graph $G$ we would like to find all multisets of $n$ real numbers that are the spectrum of some matrix $A\in \calS(G)$. IEP-$G$ is a very difficult problem, and the Strong Spectral Property of matrices became an essential tool for addressing it. For further details on this topic, the reader is referred to~\cite{Hogben-Lin-Shader-IEPG-book}.

We say that a symmetric matrix $A$ has the \emph{Strong Spectral Property (SSP)} if the only symmetric matrix $X$ satisfying $$A \circ X= I \circ X=O \ \text{and} \ [A, X]=O,$$ is the zero matrix $X=O$. This definition and its importance to IEP-$G$ was initiated in \cite{Barrett17GeneralizationsSAP}. It was shown in~\cite[Theorem 10]{Barrett17GeneralizationsSAP} that every spectrum of a matrix $A\in \calS(G)$ with the SSP is also the spectrum of a matrix $A'\in \calS(G')$ with the SSP, where $G'$ is any supergraph of $G$ on the same vertex set.  Using this result, the IEP-$G$ for graphs on at most five vertices was completely resolved~\cite[Section~3]{Barrett20Multiplicities}.  In~\cite{Fallat22Bifurcation} the authors showed that if a spectrum can be realized by a matrix $A\in \calS(G)$ with the SSP, then all the nearby spectra can also be realized by matrices in $\calS(G)$ with the SSP. Matrices with the SSP have been broadly studied since then, and with their help, IEP-$G$ for several families of graphs has been resolved, see for example~\cite{2024-Abiad-SNIP,Ahn21Six,allred2023strong,Barrett20Multiplicities,Barrett17GeneralizationsSAP,Bjorkman-2018-q,Fallat22Bifurcation,Lin-21-IEPG-block-graphs,Lin-23-Liberation-Set}, as well as their generalizations to the nonsymmetric case,~\cite{Arav-24-nSSP}. 

Since the SSP of a matrix depends on its entries, it is difficult to find a symmetric matrix with the SSP and a given spectrum. Therefore, the authors in~\cite{Lin20SSPgraph} introduced the combinatorial concept of the graph with the SSP, which is a graph $G$ for which every matrix $A\in \calS(G)$ has the SSP.  Let $$\gssp=\{G\colon \text{every } A\in \calS(G) \text{ has the SSP}\}$$ %be the set of all SSP graphs $G$. 
and if $G\in \gssp$, we say that $G$ is an \emph{SSP graph}. For example, the complete graph is in $\gssp$. Note that SSP graphs play a major role in resolving IEP-$G$; if $G\in\gssp$, then by~\cite[Theorem~10]{Barrett17GeneralizationsSAP} all the spectra realized by matrices in $\calS(G)$ can be also realized by matrices of any supergraph of $G$ on the same vertex set. In this sense, it is important to characterize sparse graphs in $\gssp$.

In~\cite{Lin20SSPgraph} the authors showed that several families of graphs are in $\gssp$; we list some of them in Lemma~\ref{lem:known-ssp-graphs}. Moreover, they gave a complete characterization of trees which belong to $\gssp$; a tree is an SSP graph if and only if it does not contain a vertex of degree at least four or two vertices of degree three. They also introduced the notion of a \emph{barbell partition} of a graph, whose existence forbids the graph to be in $\gssp$.  In \cite{allred2023strong}, the authors construct several graphs having the barbell partition hence they are not SSP graphs.  
In particular, they proved that any cactus graph having at least two cycles is not an SSP graph. 
To the best of our knowledge, beside lollipop graphs of girth three and some examples of unicylic graphs of order at most 6, there is no characterization of unicyclic graphs in $\gssp$ of an arbitrary girth.

This paper provides a complete characterization of unicyclic graphs of girth three in $\gssp$, see Theorem~\ref{thm:unicyclic-girth-3-graphs}. This will be the next fully characterized family of sparse graphs in $\gssp$ after the characterization of trees in $\gssp$ given in~\cite[Theorem~4.3]{Lin20SSPgraph}. Moreover, we will prove that any tadpole graph of girth at most five is in $\gssp$. Sections~\ref{sec:girth=3} through~\ref{sec:girth=5} will be dedicated to unicyclic graphs with girths ranging from three to five, respectively. We will show in Section~\ref{sec:girth=6} that the same is not valid for girth six tadpole graphs, and we finish with several open questions.

\section{Related work and notation}

Throughout the paper, let $G=(V(G),E(G))$ denote a simple graph. We will consider primarily unicyclic graphs, i.e. the graphs with the unique cycle. The number of vertices of a unicyclic graph's unique cycle is called the  \emph{girth of the graph}. 

First, we recall the related work and introduce the notation.

\subsection{Notation for graphs and matrices} We  denote a path on $n$ vertices by $P_n$, the cycle on $n$ vertices by $C_n$ and the tadpole graph obtained by joining a cycle $C_m$ to a path $P_n$ with a bridge, by $T_{m,n}$.  For a graph $G$, its complement is denoted by $G^c$. 
If $G$ and $H$ are two graphs with the same set of vertices and $E(G)= E(H) \cup \{e_1,\ldots, e_k\}$, we abbreviate $G=H+\{e_1,\ldots, e_k\}$. If $k=1$, we also write $G=H+e_1$.
 Let $G^{r}$ ($G^{(r)}$, respectively) be the \emph{$r$-th power of the graph $G$} (\emph{$r$-th strong power of the graph $G$}, respectively), which is defined as a graph on the same set of vertices, and when two vertices are adjacent if their distance in $G$ is at most $r$ (equal to $r$, respectively). For example, in Figure~\ref{fig:Z3-strong-power} we present a graph's first three strong powers.

For any $n\in\bN$ let $[n]=\{1,2,\ldots,n\}$. If $\alpha,\beta \subseteq [n]$, then $A[\alpha, \beta]$ is a submatrix of an $n\times n$ matrix $A$ comprised by the elements at the intersection of the rows indexed by $\alpha$ and the columns indexed by $\beta$.  If $\alpha=[n]$ ($\beta=[n]$, respectively), then we abbreviate the notation to $A[\colon,\beta]$ ($A[\alpha,\colon]$, respectively).
We use a similar notation $G[\alpha]$ to denote the \emph{induced subgraph} of $G$ on vertices $\alpha\subseteq V(G)$, i.e. we delete from $G$ all vertices not in $\alpha$.

We use $\bf 0$ to denote the zero vector, $O$ to denote the zero matrix, and $I$ for the identity matrix. To emphasize their sizes, we will sometimes use indices as subscripts. Moreover, for two square matrices of the same size, we denote by $A \circ B$ their Haddamard product and by $[A,B]=AB-BA$ their commutator.
 
\subsection{Strong spectral property of matrices} Let us recall the definition of the strong spectral property of a symmetric matrix.
\begin{definition}\label{def:SSP}
    A symmetric matrix $A$ has the \emph{Strong Spectral Property (SSP)} if the only symmetric matrix $X$ satisfying $$A \circ X= I \circ X=O \ \text{and} \ [A, X]=O$$ is the zero matrix $X=O$. 
\end{definition}
Let us denote by $\Sc(G)$ the set of all 
real symmetric $n\times n$ matrices whose $(i,j)$-entry is zero if $i=j$ or $\{i,j\}\in E(G)$. Note that for $\{i,j\}\notin E(G)$ matrix $M\in \Sc(G)$ can have $M_{i,j}=0$ or $M_{i,j}\ne 0$. 

For $A\in\calS(G)$ the first two constraints $A\circ X=I\circ X=O$ in Definition~\ref{def:SSP} imply that  $X=[x_{i,j}]\in \Sc(G^c)$. 
 The third constraint $[A,X]=O$ can be rewritten as a linear system $\Psi \bfx ={\bf 0}$ where $\bfx$ is a column vector containing the variables $x_{i,j}$, $\{i,j\}\in E(G^c)$, % that occur in matrix $X$ 
and $\Psi$ is the matrix of the system $[A,X]=O$. %Moreover, if then $\bfx$ contains a variable $x_{i,j}$ if and only if $\{i,j\}\notin E(G)$. 
In matrix $\Psi$ we index the rows by the position of the corresponding equation in the system $[A,X]=O$: row $\{i,j\}$ corresponds to the equation $[A, X]_{i,j}=O$. The columns of $\Psi$ will be indexed by pairs $\{k,\ell\}$, where column $\{k,\ell\}$ consists of coefficients corresponding to a variable $x_{k,\ell}$, $\{k,\ell\}\in E(G^c)$. 

Note that the matrix $\Psi$ depends on the order of the variables in $\bfx$ and the order of the equations of $[A,X]=O$. If the columns are ordered in the ascending lexicographic order, we denote the matrix $\Psi$ by $\Psi_S(A)$  and call it the \emph{SSP verification matrix of $A$}. The SSP verification matrix was initially defined in~\cite[Definition~7.1]{Barrett20Multiplicities}, where it was used to prove the Matrix Liberation Lemma and the Augmentation Lemma, demonstrating its effectiveness in the context of IEP-$G$. Note that the authors in~\cite{Barrett17GeneralizationsSAP,Barrett20Multiplicities,Lin-23-Liberation-Set} use the transposed version of the SSP verification matrix compared to the one we use in this paper. 

We will often consider a particular reordering of rows and columns of the verification matrix and note that every such matrix is of the form $\Psi=P \Psi_S(A) Q$, where $P$ and $Q$ are two permutation matrices. By definition, matrix $A$ has the SSP if and only if the SSP verification matrix $\Psi_S(A)$ and all the corresponding matrices $\Psi$ have full column ranks, see also~\cite[Theorem~31]{Barrett17GeneralizationsSAP} and~\cite[Section~7]{Barrett20Multiplicities}.

\subsection{Strong spectral property of graphs} 

In this paper, we work with the graphs $G$ and their properties that guarantee the SSP for all matrices $A\in \calS(G)$.

It was mentioned in~\cite{Lin20SSPgraph} that it can be proved that graph $G_{104}=T_{5,1}$ is an SSP graph. In the following example, we provide a proof which illustrates the definition of a verification matrix, and the result will be needed in Section~\ref{sec:girth=5}.

\begin{example}\label{ex:t51ssp}
 We observe that the verification matrix of an arbitrary matrix of the form $$A'=\left(
\begin{array}{cccccc}
 a_{1,1} & a_{1,2} & 0 & 0 & a_{1,5} & 0 \\
 a_{1,2} & a_{2,2} & a_{2,3} & 0 & 0 & 0 \\
 0 & a_{2,3} & a_{3,3} & a_{3,4} & 0 & 0 \\
 0 & 0 & a_{3,4} & a_{4,4} & a_{4,5} & 0 \\
 a_{1,5} & 0 & 0 & a_{4,5} & a_{5,5} & a_{5,6} \\
 0 & 0 & 0 & 0 & a_{5,6} & a_{6,6} \\
\end{array}
\right) \in \calS(T_{5,1})$$
is equal to
\newcommand\scalemath[2]{\scalebox{#1}{\mbox{\ensuremath{\displaystyle #2}}}}
\[
 \Psi_S(A')=\left(
\scalemath{0.7}{
\begin{array}{ccccccccc}
 -a_{3,4} & a_{1,1}-a_{4,4} & 0 & a_{1,2} & 0 & 0 & 0 & 0 & 0 \\
 a_{1,2} & 0 & 0 & -a_{3,4} & 0 & 0 & 0 & 0 & 0 \\
 0 & -a_{4,5} & -a_{5,6} & 0 & a_{1,2} & 0 & 0 & 0 & 0 \\
 -a_{2,3} & 0 & 0 & 0 & a_{1,5} & 0 & 0 & 0 & 0 \\
 0 & a_{1,2} & 0 & a_{2,2}-a_{4,4} & -a_{4,5} & 0 & 0 & 0 & 0 \\
 0 & 0 & a_{1,1}-a_{6,6} & 0 & 0 & a_{1,2} & 0 & 0 & 0 \\
 a_{1,1}-a_{3,3} & -a_{3,4} & 0 & 0 & 0 & 0 & a_{1,5} & 0 & 0 \\
 0 & 0 & 0 & -a_{4,5} & a_{2,2}-a_{5,5} & -a_{5,6} & a_{2,3} & 0 & 0 \\
 0 & 0 & 0 & a_{2,3} & 0 & 0 & -a_{4,5} & 0 & 0 \\
 0 & 0 & a_{1,2} & 0 & -a_{5,6} & a_{2,2}-a_{6,6} & 0 & a_{2,3} & 0 \\
 -a_{1,5} & 0 & 0 & 0 & a_{2,3} & 0 & a_{3,3}-a_{5,5} & -a_{5,6} & 0 \\
 0 & 0 & 0 & 0 & 0 & a_{2,3} & -a_{5,6} & a_{3,3}-a_{6,6} & a_{3,4} \\
 0 & 0 & a_{1,5} & 0 & 0 & 0 & 0 & 0 & a_{4,5} \\
 0 & -a_{1,5} & 0 & 0 & 0 & 0 & a_{3,4} & 0 & -a_{5,6} \\
 0 & 0 & 0 & 0 & 0 & 0 & 0 & a_{3,4} & a_{4,4}-a_{6,6} \\
\end{array}}
\right).
\]
If we reorder columns and rows of $\Psi_S(A')$, we can obtain matrix
\[
 \Psi'=\left(
\scalemath{0.7}{
\begin{array}{ccccccccc}
  -a_{3,4} & a_{1,1}-a_{4,4} & a_{1,2} & 0 & 0 & 0 & 0 & 0 & 0 \\
 a_{1,2} & 0 & -a_{3,4} & 0 & 0 & 0 & 0 & 0 & 0 \\
 
 -a_{2,3} & 0 & 0 & a_{1,5} & 0 & 0 & 0 & 0 & 0 \\
 0 & a_{1,2} & a_{2,2}-a_{4,4} & -a_{4,5} & 0 & 0 & 0 & 0 & 0 \\
 a_{1,1}-a_{3,3} & -a_{3,4} & 0 & 0 & a_{1,5} & 0 & 0 & 0 & 0 \\

 0 & 0 & a_{2,3} & 0 & -a_{4,5} & 0 & 0 & 0 & 0 \\0 & -a_{4,5} & 0 & a_{1,2} & 0 & -a_{5,6} & 0 & 0 & 0 \\
  0 & 0 & -a_{4,5} & a_{2,2}-a_{5,5} & a_{2,3} & 0 & -a_{5,6} & 0 & 0 \\
 -a_{1,5} & 0 & 0 & a_{2,3} & a_{3,3}-a_{5,5} & 0 & 0 & -a_{5,6} & 0 \\
 0 & -a_{1,5} & 0 & 0 & a_{3,4} & 0 & 0 & 0 & -a_{5,6} \\
 0 & 0 & 0 & 0 & 0 & a_{1,1}-a_{6,6} & a_{1,2} & 0 & 0 \\
 0 & 0 & 0 & -a_{5,6} & 0 & a_{1,2} & a_{2,2}-a_{6,6} & a_{2,3} & 0 \\
 0 & 0 & 0 & 0 & -a_{5,6} & 0 & a_{2,3} & a_{3,3}-a_{6,6} & a_{3,4} \\
 0 & 0 & 0 & 0 & 0 & 0 & 0 & a_{3,4} & a_{4,4}-a_{6,6} \\
 0 & 0 & 0 & 0 & 0 & a_{1,5} & 0 & 0 & a_{4,5}
\end{array}}
\right),
\]
which is permutationally similar to $\Psi_S(A')$.
Notice that the submatrix $\Psi'_{\alpha}=\Psi_S(A')[\alpha,\colon]$ consisting of the rows with indices in $$\alpha=\{\{2,3\},\{1,5\},\{1,2\},\{3,4\},\{5,6\},\{1,6\},\{2,6\},\{4,6\},\{4,5\}\},$$
is equal to
\begin{equation}\label{eq:invertible-submatrix-of-verification-matrix-T51} 
\Psi'_{\alpha}=
\begin{pmatrix}
 a_{1,2} & 0 & -a_{3,4} & 0 & 0 & 0 & 0 & 0 & 0 \\
 0 & -a_{4,5} & 0 & a_{1,2} & 0 & -a_{5,6} & 0 & 0 & 0 \\
 -a_{2,3} & 0 & 0 & a_{1,5} & 0 & 0 & 0 & 0 & 0 \\
 0 & 0 & a_{2,3} & 0 & -a_{4,5} & 0 & 0 & 0 & 0 \\
 0 & -a_{1,5} & 0 & 0 & a_{3,4} & 0 & 0 & 0 & -a_{5,6} \\
 0 & 0 & 0 & 0 & 0 & a_{1,1}-a_{6,6} & a_{1,2} & 0 & 0 \\
 0 & 0 & 0 & -a_{5,6} & 0 & a_{1,2} & a_{2,2}-a_{6,6} & a_{2,3} & 0 \\
 0 & 0 & 0 & 0 & 0 & 0 & 0 & a_{3,4} & a_{4,4}-a_{6,6} \\
 0 & 0 & 0 & 0 & 0 & a_{1,5} & 0 & 0 & a_{4,5} 
\end{pmatrix},
\end{equation}
and is invertible since $\det(\Psi'_{\alpha})=2 a_{1,2} a_{1,5} a_{2,3} a_{3,4}^2 a_{4,5}^2 a_{5,6}^2\ne 0$.  This implies that $\Psi'$ is of full column rank for any matrix $A'\in \calS(T_{5,1})$ and hence $A'$ has the SSP. Therefore, $T_{5,1}$ is an SSP graph.
\end{example}

In~\cite{Lin20SSPgraph} the authors introduced three rules that force some entries of the matrix $X\in \Sc(G^c)$ with $[A,X]=O$ to be equal to $0$. Those rules can be used to prove that $G\in \gssp$. In Lemma~\ref{lem:rule4} we prove a new rule. First, we recall one of the rules from~\cite{Lin20SSPgraph}.

\begin{lemma}\cite[Lemma~3.1]{Lin20SSPgraph}\label{lem:rule1}
Let $G$ be a graph and $G_{\ell}$ a supergraph of $G$ of the same order.  Suppose $A\in\calS(G)$, 
$X\in\Sc(G_{\ell}^c)$, and $[A,X]=O$.  If for some  $i,j,k \in V(G)$ the conditions 
\[ N_{G}[i]\cap N_{G_{\ell}}[j]^c=\{k\} \text{ and } N_G[j] \cap N_{G_{\ell}}[i]^c=\emptyset\]
hold, then $X\in\Sc(G_{\ell+1}^c)$ with $G_{\ell+1} = G_{\ell} + \{j,k\}$. 
\end{lemma}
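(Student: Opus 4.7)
My plan is to read off a single entry of the equation $[A,X]=O$, use the two hypotheses to kill almost every term, and then divide by a scalar that the hypotheses themselves force to be nonzero. The target conclusion is that $X_{k,j}=0$, which together with the zeros of $X$ already encoded in $X\in\Sc(G_\ell^c)$ is exactly the statement $X\in\Sc(G_{\ell+1}^c)$ for $G_{\ell+1}=G_\ell+\{j,k\}$.

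Concretely, I start from
\[[A,X]_{i,j} = \sum_m A_{i,m}\, X_{m,j} - \sum_m X_{i,m}\, A_{m,j} = 0.\]
In the first sum $A_{i,m}$ vanishes unless $m\in N_G[i]$, while $X_{m,j}$ is already known to vanish unless $m\in N_{G_\ell}[j]^c$ (the diagonal zero removes $m=j$, and the constraint $X\in\Sc(G_\ell^c)$ removes every remaining neighbor of $j$ in $G_\ell$). The first hypothesis pins the intersection of these two index sets to $\{k\}$, so the first sum collapses to the single term $A_{i,k}X_{k,j}$. An identical bookkeeping on the second sum leaves only indices $m\in N_G[j]\cap N_{G_\ell}[i]^c$, which is empty by the second hypothesis. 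Hence the commutator identity reduces to the scalar equation $A_{i,k}X_{k,j}=0$.

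The delicate step is verifying $A_{i,k}\ne 0$, i.e.\ that $k\ne i$ and $\{i,k\}\in E(G)$; this is where the second hypothesis earns its keep. Since $j\in N_G[j]$, emptiness of $N_G[j]\cap N_{G_\ell}[i]^c$ forces $j\in N_{G_\ell}[i]$, so, after a quick check that $i\ne j$, one must have $\{i,j\}\in E(G_\ell)$ and therefore $i\in N_{G_\ell}[j]$. This rules $i$ out of $N_G[i]\cap N_{G_\ell}[j]^c=\{k\}$ and gives $k\ne i$; combined with $k\ne j$ (since $j\in N_{G_\ell}[j]$), it yields $\{i,k\}\in E(G)$ and hence $A_{i,k}\ne 0$, so $X_{k,j}=0$ as required. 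The main obstacle I expect is precisely this exclusion of the degenerate case $k=i$: without it the coefficient of $X_{k,j}$ would be the diagonal entry $A_{i,i}$, which is not assumed nonzero, and the argument would stall; happily, the second hypothesis rules this case out automatically.
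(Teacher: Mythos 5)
This lemma is quoted from \cite[Lemma~3.1]{Lin20SSPgraph} and the paper gives no proof of its own, but your argument is correct and is exactly the standard one: expand $[A,X]_{i,j}=0$, use the two neighborhood hypotheses to reduce it to $A_{i,k}X_{k,j}=0$, and check $k\neq i$ and $\{i,k\}\in E(G)$ so that the coefficient is a genuinely nonzero off-diagonal entry. The one step you wave at, ``a quick check that $i\ne j$,'' is immediate since $N_G[i]\subseteq N_{G_\ell}[i]$ would force $N_G[i]\cap N_{G_\ell}[i]^c=\emptyset\ne\{k\}$ if $i=j$.
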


The following two lemmas ensure several entries of the matrix $X\in \Sc(G^c)$ with $[A,X]=O$ to be equal to $0$ in one step only.

\begin{lemma}\cite[Corollary~3.8]{Lin20SSPgraph}\label{lem:induced-paths}
Let $G$ be a graph and $G_{\ell}$ a supergraph of $G$ with $V(G)=V(G_{\ell})$. Suppose a path $P$ is an induced subgraph of $G$ with vertices labelled in the path order as $p_1,\ldots,p_n$, and $E(P^d)\subseteq E(G_{\ell})$ for some $d$. Moreover, let 
\[ N_{G}[p_i]\cap N_{G_{\ell}}[p_{i+d}]^c\subseteq V(P) \text{ and } N_G[p_{i+d}] \cap N_{G_{\ell}}[p_i]^c\subseteq V(P)\]
for $i=1,\ldots,n-d-1$. %For $$G_{\ell}=G_+ \{\{p_i,p_{i+d+1}\}: 1 \leq i \leq n-d-1\}.$$
If $A\in\calS(G)$, $X\in\Sc(G_{\ell}^c)$ and $[A,X]=O$, then $X\in\Sc(G_{\ell_1}^c)$, where 
%and $X_{p_i,p_{i+d'}}=0$ for $i\in [n-d'-1]$ and $1\leq d'\leq d$, then $X_{p_i,p_{i+d+1}}=0$ for  $i\in [n-d-2]$.
 %$G_{\ell}$, then $\{p_i,p_{i+d}\}\rightarrow \{p_i,p_{i+d+1}\}$ may occur sequentially and the resulting graph $G_{\ell_1}$ is %$G_{\ell}\cup P^{d+1}$. 
$G_{\ell_1}=G_{\ell}+ \{\{p_i,p_{i+d+1}\}: 1 \leq i \leq n-d-1\}$,
%then $X\in\Sc(G_{\ell}^c)$, where
%$$G_{\ell}=G+ \{\{p_i,p_{i+d+1}\}: 1 \leq i \leq n-d-1\}.$$
i.e., $E(P^{d+1})\subseteq E(G_{\ell_1})$. 
\end{lemma}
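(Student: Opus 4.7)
The plan is to apply Lemma~\ref{lem:rule1} iteratively along the path $P$, processing the candidate edges $\{p_i,p_{i+d+1}\}$ in increasing order of $i$ and using each newly killed entry as input for the next application.

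Set $G_\ell^{(0)}=G_\ell$ and, inductively for $i=1,\ldots,n-d-1$, let
\[ G_\ell^{(i)} = G_\ell^{(i-1)} + \{p_i,p_{i+d+1}\}. \]
I would induct on $i$ to show $X\in \Sc((G_\ell^{(i)})^c)$. At the $i$-th step, apply Lemma~\ref{lem:rule1} with the vertex-triple $(p_{i+d},p_i,p_{i+d+1})$ relative to the current supergraph $G_\ell^{(i-1)}$. Since $G_\ell\subseteq G_\ell^{(i-1)}$, the complements $N_{G_\ell^{(i-1)}}[\,\cdot\,]^c$ are contained in $N_{G_\ell}[\,\cdot\,]^c$, so the two stated containment hypotheses immediately confine both intersections appearing in Lemma~\ref{lem:rule1} to $V(P)$. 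Because $P$ is induced in $G$, the path vertices inside $N_G[p_{i+d}]$ are $p_{i+d-1},p_{i+d},p_{i+d+1}$, and those inside $N_G[p_i]$ are $p_{i-1},p_i,p_{i+1}$ (with $p_{i-1}$ dropped when $i=1$). Their membership in the relevant closed neighborhoods is then decided by path distance, using $E(P^d)\subseteq E(G_\ell)$ together with the edges added in previous inductive steps.

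The only subtle point, and what I would expect to be the main obstacle, is the condition $N_G[p_i]\cap N_{G_\ell^{(i-1)}}[p_{i+d}]^c=\emptyset$ when $i\ge 2$. The vertex $p_{i-1}$ sits at path-distance $d+1$ from $p_{i+d}$, so it does not lie in $N_{G_\ell}[p_{i+d}]$ for the original supergraph, and a naive direct application of Lemma~\ref{lem:rule1} to $G_\ell$ alone would fail. The chosen left-to-right ordering is exactly what rescues this: the edge $\{p_{i-1},p_{i+d}\}=\{p_{i-1},p_{(i-1)+d+1}\}$ was added at step $i-1$, so $p_{i-1}\in N_{G_\ell^{(i-1)}}[p_{i+d}]$ by the time Lemma~\ref{lem:rule1} is invoked at step $i$. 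Symmetrically, the first condition evaluates to $\{p_{i+d+1}\}$ because that vertex sits at path-distance $d+1$ from $p_i$ and the edge $\{p_i,p_{i+d+1}\}$ has not yet been added (if it were already present, the step is vacuous).

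Lemma~\ref{lem:rule1} then yields $X\in\Sc((G_\ell^{(i)})^c)$, closing the induction. Taking $i=n-d-1$ recovers $X\in\Sc(G_{\ell_1}^c)$, and $E(P^{d+1})\subseteq E(G_{\ell_1})$ is a bookkeeping consequence of combining the assumed $E(P^d)\subseteq E(G_\ell)$ with the freshly added edges $\{p_i,p_{i+d+1}\}$ for $i=1,\ldots,n-d-1$.
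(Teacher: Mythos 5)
The paper does not prove this lemma; it imports it verbatim as \cite[Corollary~3.8]{Lin20SSPgraph}, so there is no internal proof to compare against. Your argument is correct and is the natural derivation from Lemma~\ref{lem:rule1}: the triple $(p_{i+d},p_i,p_{i+d+1})$ matches the roles $(i,j,k)$ in that lemma so that the added edge is indeed $\{p_i,p_{i+d+1}\}$, the two containment hypotheses together with $P$ being induced confine both intersections to $\{p_{i+d-1},p_{i+d},p_{i+d+1}\}$ and $\{p_{i-1},p_i,p_{i+1}\}$ respectively, and $E(P^d)\subseteq E(G_\ell)$ disposes of all of these except $p_{i+d+1}$ and $p_{i-1}$. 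You correctly isolate the one genuine obstruction --- $p_{i-1}$ lies at path-distance $d+1$ from $p_{i+d}$, so the emptiness condition fails relative to $G_\ell$ itself --- and the left-to-right ordering resolves it because $\{p_{i-1},p_{(i-1)+d+1}\}$ was added at the previous step; the vacuous case where $\{p_i,p_{i+d+1}\}$ already lies in $G_\ell$ is also handled. This is a complete proof of the cited statement.
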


\begin{lemma}\cite[Corollary~3.9]{Lin20SSPgraph}\label{lem:corofcor}
 Let a graph $G$ have an induced path $P$ on vertices (in the path order) $v_1, \ldots, v_m$ such that $v_m$ is the only vertex with edges to the rest of the graph. If $A\in\calS(G)$, $X\in\Sc(G^c)$, and $[A,X]=O$, then $X\in\Sc(G_{1}^c)$ with $G_{1} = G+ \{\{v_i,v_j\}\colon 1 \leq i < j \leq m\}$.
\end{lemma}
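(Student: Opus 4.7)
The plan is to iterate Lemma~\ref{lem:induced-paths} along the pendant path $P = v_1 v_2 \cdots v_m$, letting the distance parameter $d$ run from $1$ up to $m-2$. Each application widens the current supergraph by the edges joining path vertices at distance exactly $d+1$ while preserving the membership $X \in \Sc((\cdot)^c)$. After $m-2$ steps the supergraph will contain $P^{m-1}$, which is the complete graph on $V(P)$, precisely the edge set prescribed in the definition of $G_1$.

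Concretely, I would set $G_0 := G$, so that $E(P) \subseteq E(G_0)$ and $X \in \Sc(G_0^c)$. Assuming inductively that for some $d \in \{1, \ldots, m-2\}$ a supergraph $G_{d-1} \supseteq G$ has been produced with $E(P^d) \subseteq E(G_{d-1})$ and $X \in \Sc(G_{d-1}^c)$, I apply Lemma~\ref{lem:induced-paths} with $G_\ell := G_{d-1}$, path $P$, and parameter $d$. The only nontrivial hypothesis to verify is, for every $i = 1, \ldots, m-d-1$, the pair of inclusions
\[
N_G[v_i] \cap N_{G_{d-1}}[v_{i+d}]^c \subseteq V(P),
\qquad
N_G[v_{i+d}] \cap N_{G_{d-1}}[v_i]^c \subseteq V(P).
\]
In this range both indices $i$ and $i+d$ lie in $\{1, \ldots, m-1\}$, so $v_i$ and $v_{i+d}$ are different from the anchor vertex $v_m$. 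The hypothesis of Lemma~\ref{lem:corofcor} asserts that $v_m$ is the \emph{only} vertex of $P$ with a neighbour outside $V(P)$, hence $N_G[v_i] \subseteq V(P)$ and $N_G[v_{i+d}] \subseteq V(P)$, and both inclusions become automatic regardless of the extra edges that $G_{d-1}$ may have accumulated. Lemma~\ref{lem:induced-paths} then produces a supergraph $G_d \supseteq G_{d-1}$ with $E(P^{d+1}) \subseteq E(G_d)$ and $X \in \Sc(G_d^c)$.

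Running the induction up to $d = m-2$ gives $G_{m-2}$ with $E(P^{m-1}) \subseteq E(G_{m-2})$. Because the diameter of $P$ equals $m-1$, $P^{m-1}$ is the complete graph on $V(P)$, so every pair $\{v_i, v_j\}$ with $1 \leq i < j \leq m$ is an edge of $G_{m-2}$. With $G_1$ as in the lemma's statement we thus have $G_1 \subseteq G_{m-2}$, whence $X \in \Sc(G_{m-2}^c) \subseteq \Sc(G_1^c)$, as required. The whole argument is essentially bookkeeping; the only subtle point is the endpoint $i = m-d-1$, where the new edge $\{v_{m-d-1}, v_m\}$ does touch the anchor vertex $v_m$. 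This is harmless because $v_m$ enters only the \emph{conclusion} of Lemma~\ref{lem:induced-paths} and not the two neighbourhood conditions that must be checked, so its possible external neighbours never obstruct the verification.
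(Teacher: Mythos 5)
Your argument is correct. Note that the paper does not prove this statement at all --- it is quoted as \cite[Corollary~3.9]{Lin20SSPgraph} --- but your derivation by iterating Lemma~\ref{lem:induced-paths} with $d=1,\ldots,m-2$, observing that the neighbourhood conditions hold vacuously because $N_G[v_i]\subseteq V(P)$ for every $i\le m-1$, is exactly the intended ``corollary of the corollary'' route, and every step (including the harmless appearance of $v_m$ only in the conclusion of each application) checks out.
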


Note that these lemmas force some of the entries of $X$ to be equal to $0$. If we set them to $0$, then the corresponding verification matrix will have fewer columns (since there are fewer variables $x_{i,j}$ in our linear system) and fewer rows (since some of the rows will be equal to $\bf 0$). See Sections~\ref{sec:girth-4} and~\ref{sec:girth=5}, where we use smaller verification matrices via this interpretation.

\smallskip

In~\cite[Definition~2.2]{Lin20SSPgraph} the authors define a barbell partition of a graph $G$ and show that its existence guarantees $G\notin \gssp$. Using this concept, the following was proved.

\begin{lemma}\cite[Corollary~2.5]{Lin20SSPgraph}\label{lem:unicyclic-not-in-gssp}
If a unicyclic graph $G$ has a vertex $v$ such that $\deg(v)\geq 4$, or a vertex $u$ not contained in the cycle with $\deg(u)\geq 3$, then $G\notin\gssp$.
\end{lemma}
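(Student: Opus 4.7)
The plan is to apply the barbell partition machinery of \cite[Definition~2.2]{Lin20SSPgraph}, which guarantees that any graph admitting a barbell partition lies outside $\gssp$. Thus it suffices to exhibit, in each of the two cases, an explicit barbell partition of $G$.

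The key observation to drive both cases uniformly is that $G$ has a vertex $w$ with at least two \emph{tree-branch} neighbors, i.e.\ neighbors $u_1,u_2$ such that the component of $G-w$ containing $u_i$ is a tree $T_i$. If $w=v$ has $\deg(v)\ge 4$, then since $G$ is unicyclic, at most two of $v$'s neighbors lie on the unique cycle, so at least two of them start tree-branches off $v$. If $w=u$ is not on the cycle and has $\deg(u)\ge 3$, then $G-u$ has at least three components, and since $G$ has only one cycle, at most one of those components contains it; hence at least two components are trees, yielding the required tree-branch neighbors of $u$.

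Given $w$ and tree branches $T_1,T_2$, I would construct the barbell partition by taking $V(T_1)$ and $V(T_2)$ as the two \emph{bells} and the remaining vertices (which include $w$ and all of the cycle if present) as the \emph{bar}. The geometric idea is that two tree branches sharing the single attachment point $w$ provide enough symmetry to produce, for an arbitrary $A\in\calS(G)$, a nonzero symmetric $X\in\Sc(G^c)$ supported on entries between $T_1$ and $T_2$ (and possibly on diagonal-type adjustments within the bar) that commutes with $A$. This is analogous to the construction used for vertices of degree at least four in trees, where the extra branches give enough ``room'' to contradict the SSP.

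The main technical obstacle is verifying the formal axioms of the barbell partition for our chosen triple $(V(T_1),V(T_2),N)$; concretely, one must check the compatibility conditions on the bar $N$ and confirm that the candidate matrix $X$, built from the reflection/pairing between $T_1$ and $T_2$, simultaneously satisfies $A\circ X = I\circ X = O$ and $[A,X]=O$ for every $A\in\calS(G)$. Once these conditions are in place, the barbell criterion of \cite[Definition~2.2]{Lin20SSPgraph} immediately yields $G\notin\gssp$, completing both cases.
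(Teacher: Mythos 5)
First, a point of comparison: the paper itself contains no proof of this lemma --- it is imported verbatim from \cite[Corollary~2.5]{Lin20SSPgraph}, and the surrounding text only indicates that the result follows from the barbell-partition criterion of that paper. So you have reached for the right tool, and your preliminary counting is correct (a cycle vertex $v$ with $\deg(v)\geq 4$ has at least two neighbours whose components in $G-v$ are trees, and likewise an off-cycle vertex $u$ with $\deg(u)\geq 3$). The genuine gap is in the step you yourself flag as ``the main technical obstacle'': the partition you propose --- the two tree branches $V(T_1)$, $V(T_2)$ as bells, everything else as the bar $N$ --- cannot be a barbell partition, and no verification of the axioms will rescue it. The reason is that the only data your construction uses is the existence of two tree branches hanging from a single vertex $w$, each attached to the rest of the graph by a single edge at $w$. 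That configuration occurs in graphs that \emph{are} in $\gssp$: the middle vertex of $P_3$ and the centre of $K_{1,3}$ each have two (indeed three) tree-branch neighbours, yet $P_3,K_{1,3}\in\gssp$ by the tree characterization \cite[Theorem~4.3]{Lin20SSPgraph}, and similarly $T_{3,n}\in\gssp$ by Lemma~\ref{lem:known-ssp-graphs}. If ``two tree branches as bells, the rest as bar'' were a barbell partition, these graphs would be excluded from $\gssp$ as well. So your argument proves too much: the hypotheses $\deg(v)\geq 4$, respectively $\deg(u)\geq 3$ with $u$ off the cycle, must enter the construction of the bells essentially. In the tree analogue a degree-four vertex supplies \emph{four} branches, split two-and-two into the two bells with $N=\{v\}$; in the unicyclic setting the cycle has to stand in for the missing branches (compare Example~\ref{ex:even-cycle+2paths}, where the two halves of an even cycle are paired with the two pendant paths), and making that precise is the actual content of the proof, not a routine verification.

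A secondary error: you propose to produce a nonzero $X$ with $A\circ X=I\circ X=O$ and $[A,X]=O$ ``for an arbitrary $A\in\calS(G)$''. That is both more than is needed and false in general --- for instance, a matrix in $\calS(K_{1,4})$ whose diagonal entries at the four leaves are pairwise distinct does have the SSP, even though $K_{1,4}\notin\gssp$. Showing $G\notin\gssp$ only requires exhibiting a single matrix $A\in\calS(G)$ without the SSP, and the barbell construction builds this special $A$ (with matched diagonal and off-diagonal data across the two bells) together with the witness $X$.
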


Hence if a unicyclic graph is in $\gssp$, it must be obtained by joining a cycle $C_{m}$ to $k$ paths $P_{n_1},\ldots,P_{n_k}$,  $0\leq k\leq m$, $n_i\in \bN$, by bridges from distinct vertices of the cycle. It is known that not all such graphs are in $\gssp$, see e.g.~\cite[Example 5.10]{Lin20SSPgraph}.

\begin{lemma}\label{lem:known-ssp-graphs}
The following families of unicyclic graphs are in $\gssp$:
\begin{enumerate}
   \item Tadpole graphs $T_{3,n}\in \gssp$ for $n\in\bN_0$, see~\cite[Example~4.3]{Lin20SSPgraph} for $m=3$, and $T_{4,1}$, see~\cite[Example~3.17]{Lin20SSPgraph}.
   \item\label{item:k-at-most-2} Unicyclic graph of girth $3$ obtained by joining a cycle $C_{3}$ to two paths by two bridges from distinct vertices of the cycle, see~\cite[Theorem~51]{Barrett17GeneralizationsSAP}  and~\cite[Theorem~5.2]{Lin20SSPgraph}.
\end{enumerate}
\end{lemma}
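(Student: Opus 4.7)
The statement is a compilation of results from the literature, so the most economical proof is simply to invoke the cited sources: for part (1) the claim $T_{3,n}\in\gssp$ is~\cite[Example~4.3]{Lin20SSPgraph} and the special case $T_{4,1}\in\gssp$ is~\cite[Example~3.17]{Lin20SSPgraph}; for part (2) one cites~\cite[Theorem~51]{Barrett17GeneralizationsSAP} together with~\cite[Theorem~5.2]{Lin20SSPgraph}. My plan is therefore to write the proof as a one-line reference to these sources, but for the reader's convenience I would sketch how each statement fits into the framework developed in the previous subsections.

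For part (1), I would fix an arbitrary $A\in\calS(T_{3,n})$, label the triangle vertices $u_1,u_2,u_3$ and the pendant path vertices $p_1,\ldots,p_n$ with $p_1$ adjacent to $u_3$. I would first apply Lemma~\ref{lem:corofcor} to the induced path $p_n,p_{n-1},\ldots,p_1$, whose only vertex with edges to the rest of $T_{3,n}$ is $p_1$; this forces $x_{p_i,p_j}=0$ for every $i\ne j$. The remaining unknowns of $X$ are the cross entries $x_{u_i,p_k}$ for non-adjacent pairs, and I would analyse the commutator system restricted to those variables by peeling off equations along the tail, analogously to how the submatrix $\Psi'_{\alpha}$ was isolated in Example~\ref{ex:t51ssp}. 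The case $T_{4,1}$ is handled directly by an ad hoc $9\times 9$ verification matrix argument in the same spirit.

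For part (2), let $G$ consist of $C_3$ with vertices $u_1,u_2,u_3$ together with pendant paths $p_1,\ldots,p_s$ attached at $u_1$ and $q_1,\ldots,q_t$ attached at $u_2$. I would apply Lemma~\ref{lem:corofcor} independently to each of the two tails to eliminate all within-tail variables $x_{p_i,p_j}$ and $x_{q_i,q_j}$, leaving only the cross-tail entries $x_{p_i,q_j}$ and the triangle-to-tail entries $x_{u_k,p_i}, x_{u_k,q_j}$ for non-adjacent pairs. I would then apply Lemma~\ref{lem:rule1} iteratively, starting at the triangle vertex not carrying a tail and propagating outward, to conclude that the remaining entries of $X$ are also forced to be zero.

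The main obstacle is the bookkeeping for the residual cross entries in both parts: after the path reductions one still has a dense system linking triangle and tail variables, and verifying full column rank of the reduced verification matrix requires keeping track of which entries of $A$ appear in the crucial determinants. For part (1) this would likely be handled cleanly by induction on $n$ (using $T_{3,n-1}\in\gssp$ as hypothesis), while for part (2) one may need a double induction on $s$ and $t$ or a careful ordering of the rule applications. Because all of this is already worked out in the cited papers, the actual proof in this paper will be a pointer to those references.
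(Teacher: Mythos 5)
Your proposal matches the paper exactly: the lemma is stated as a compilation of known results and the paper offers no proof beyond the citations embedded in the statement itself, which is precisely your primary plan. The additional sketches you offer are reasonable but unnecessary supplements to what is, in the paper, a pure pointer to \cite[Examples~3.17 and~4.3]{Lin20SSPgraph}, \cite[Theorem~5.2]{Lin20SSPgraph}, and \cite[Theorem~51]{Barrett17GeneralizationsSAP}.
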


\section{Unicyclic graphs of girth three}\label{sec:girth=3}

In this Section, we extend the characterization of trees in $\gssp$, \cite{Lin20SSPgraph}, to the complete characterization of unicyclic graphs of girth three in $\gssp$. We will prove that for unicyclic graphs of girth three, the necessary condition from Lemma~\ref{lem:unicyclic-not-in-gssp} is also sufficient.

If $G$ is a unicyclic graph in $\gssp$ of girth three, it must be obtained by joining the cycle $C_{3}$ to $k$ paths $P_{n_1},\ldots,P_{n_k}$, $0\leq k \leq 3$, $n_i\in \bN$, by bridges from distinct vertices of the cycle $C_3$. 
In the case $k \leq 2$, such graphs are in $\gssp$ by Lemma~\ref{lem:known-ssp-graphs}. %(\ref{item:k-at-most-2}).  
Therefore, we only have to resolve the case $k=3$. We will show in Theorem~\ref{thm:unicyclic-girth-3-graphs} that all such graphs are in $\gssp$.

\newcommand{\exscale}{0.6}
 \begin{center}
 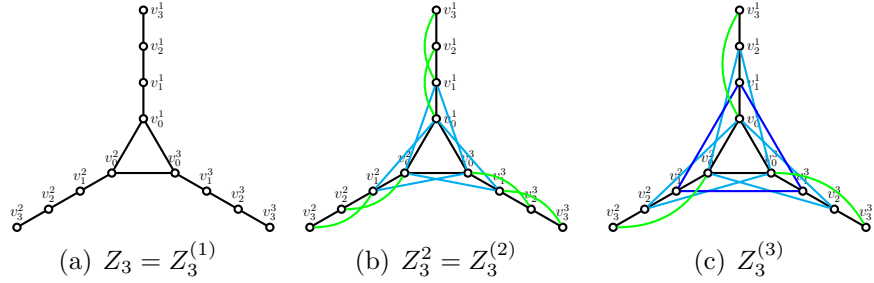
\begin{figure}[htb!]
 \centering
 \subfigure	[$Z_3=Z_3^{(1)}$]{
    \begin{tikzpicture}[scale=\exscale,transform shape]

\node[label={right:$v_0^1$}] (v01)   at (90:1) {}; 
\node[label={above:$v_0^2$}] (v02)   at (210:1) {}; 
\node[label={above:$v_0^3$}] (v03)   at (330:1) {}; 

\draw (v01) -- (v02) -- (v03)--(v01);

\node[label={right:$v_1^1$}] (v11)   at (90:2) {}; 
\node[label={above:$v_1^2$}] (v12)   at (210:2) {}; 
\node[label={above:$v_1^3$}] (v13)   at (330:2) {}; 
\node[label={right:$v_2^1$}] (v21)   at (90:3) {}; 
\node[label={above:$v_2^2$}] (v22)   at (210:3) {}; 
\node[label={above:$v_2^3$}] (v23)   at (330:3) {}; 
\node[label={right:$v_3^1$}] (v31)   at (90:4) {}; 
\node[label={above:$v_3^2$}] (v32)   at (210:4) {}; 
\node[label={above:$v_3^3$}] (v33)   at (330:4) {}; 

\draw (v01) -- (v11) -- (v21) -- (v31);
\draw (v02) -- (v12) -- (v22) -- (v32);
\draw (v03) -- (v13) -- (v23) -- (v33);

    \end{tikzpicture}\label{Z3}}
  \subfigure	[$Z_3^{2}=Z_3^{(2)}$]{
    \begin{tikzpicture}[scale=\exscale,transform shape]

\node[label={right:$v_0^1$}] (v01)   at (90:1) {}; 
\node[label={above:$v_0^2$}] (v02)   at (210:1) {}; 
\node[label={above:$v_0^3$}] (v03)   at (330:1) {}; 

\draw (v01) -- (v02) -- (v03)--(v01);

\node[label={right:$v_1^1$}] (v11)   at (90:2) {}; 
\node[label={above:$v_1^2$}] (v12)   at (210:2) {}; 
\node[label={above:$v_1^3$}] (v13)   at (330:2) {}; 
\node[label={right:$v_2^1$}] (v21)   at (90:3) {}; 
\node[label={above:$v_2^2$}] (v22)   at (210:3) {}; 
\node[label={above:$v_2^3$}] (v23)   at (330:3) {}; 
\node[label={right:$v_3^1$}] (v31)   at (90:4) {}; 
\node[label={above:$v_3^2$}] (v32)   at (210:4) {}; 
\node[label={above:$v_3^3$}] (v33)   at (330:4) {}; 

\draw (v01) -- (v11) -- (v21) -- (v31);
\draw (v02) -- (v12) -- (v22) -- (v32);
\draw (v03) -- (v13) -- (v23) -- (v33);

\begin{scope}[cyan]
\draw (v01) -- (v12) -- (v03) -- (v11) -- (v02) -- (v13) -- (v01);
\end{scope}

\begin{scope}[green]
\draw (v01) to[bend left] (v21);
\draw (v11) to[bend left] (v31);
\draw (v02) to[bend left] (v22);
\draw (v12) to[bend left] (v32);
\draw (v03) to[bend left] (v23);
\draw (v13) to[bend left] (v33);
\end{scope}

    \end{tikzpicture}   \label{Z32}} 
   \subfigure	[$Z_3^{(3)}$]{
    \begin{tikzpicture}[scale=\exscale,transform shape]

\node[label={right:$v_0^1$}] (v01)   at (90:1) {}; 
\node[label={above:$v_0^2$}] (v02)   at (210:1) {}; 
\node[label={above:$v_0^3$}] (v03)   at (330:1) {}; 

\draw (v01) -- (v02) -- (v03)--(v01);

\node[label={right:$v_1^1$}] (v11)   at (90:2) {}; 
\node[label={above:$v_1^2$}] (v12)   at (210:2) {}; 
\node[label={above:$v_1^3$}] (v13)   at (330:2) {}; 
\node[label={right:$v_2^1$}] (v21)   at (90:3) {}; 
\node[label={above:$v_2^2$}] (v22)   at (210:3) {}; 
\node[label={above:$v_2^3$}] (v23)   at (330:3) {}; 
\node[label={right:$v_3^1$}] (v31)   at (90:4) {}; 
\node[label={above:$v_3^2$}] (v32)   at (210:4) {}; 
\node[label={above:$v_3^3$}] (v33)   at (330:4) {}; 

\draw (v01) -- (v11) -- (v21) -- (v31);
\draw (v02) -- (v12) -- (v22) -- (v32);
\draw (v03) -- (v13) -- (v23) -- (v33);

\begin{scope}[cyan]
\draw (v01) -- (v22) -- (v03) -- (v21) -- (v02) -- (v23) -- (v01);
\end{scope}

\begin{scope}[blue]
\draw (v11) -- (v12) -- (v13) -- (v11);
\end{scope}

\begin{scope}[green]
\draw (v01) to[bend left] (v31);
\draw (v02) to[bend left] (v32);
\draw (v03) to[bend left] (v33);
\end{scope}

    \end{tikzpicture}   \label{Z33}} 
    \caption{Graph $Z_3$ together with its strong powers $Z_3^{(r)}$, $r\leq 3$.}\label{fig:Z3-strong-power}
\end{figure}
\end{center}

 Let $Z_h$ be the graph on $3h+3$ vertices obtained by joining the cycle $C_{3}$ to three paths $P_{h}$ by three bridges from distinct vertices of the cycle $C_3$, see Figure~\ref{fig:Z3-strong-power}.  
First we prove a similar lemma as in~\cite[Lemma~3.13]{Lin20SSPgraph}; 
if $G$ is a particular graph,  $A\in \calS(G)$, and $X\in \Sc(G^c)$, such that $[A,X]=O$ and the entries in $X$ in the positions of $Z_h^{h}$ are equal to $0$,  then the following lemma will guarantee that entries in $X$ in the positions of $Z_h^{(h+1)}$ are also equal to $0$.

\begin{lemma}\label{lem:rule4}
Let $G$ be a graph and $G_{\ell}$ a supergraph of $G$ of the same order.  Suppose $A\in\calS(G)$, $X\in \Sc(G_l^c)$, and $[A,X]=O$.  If $Z_h$ is an induced subgraph of $G$ such that  
\begin{enumerate}
\item $E(Z_h^{h})\subseteq E(G_{\ell})$ and $E(Z_h^{(h+1)})\cap E(G_{\ell})=\emptyset$, and 
\item for  all $\{u,v\}\in E(Z_h^{(h)})$ we have $N_G[u]\cap N_{G_{\ell}}[v]^c \subseteq V(Z_h)$ and $N_G[v] \cap N_{G_{\ell}}[u]^c\subseteq V(Z_h)$,
\end{enumerate}
then $X\in \Sc(G_{l+1}^c)$ with $G_{\ell+1}=G_{\ell} +E(Z_h^{(h+1)})$.
\end{lemma}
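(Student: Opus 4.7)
The plan is to derive from each commutator equation $[A,X]_{u,v}=0$ with $\{u,v\}\in E(Z_h^{(h)})$ a linear relation on the target entries $\{X_{p,q}:\{p,q\}\in E(Z_h^{(h+1)})\}$, and then to show that the resulting homogeneous square system admits only the trivial solution. Expanding
\[
[A,X]_{u,v}=\sum_{k}A_{u,k}X_{k,v}-\sum_{k}X_{u,k}A_{k,v}=0,
\]
a nonzero $A_{u,k}$ forces $k\in N_G[u]$, while a nonzero $X_{k,v}$ forces $\{k,v\}\notin E(G_{\ell})$; by hypothesis~(2) the latter localizes $k$ in $V(Z_h)$, and since $Z_h$ is an induced subgraph of $G$, such $k$ is in fact a $Z_h$-neighbour of $u$. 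Because $\{k,v\}\notin E(G_{\ell})\supseteq E(Z_h^{h})$ and $d_{Z_h}(u,k)\le 1$, the triangle inequality pins $d_{Z_h}(k,v)=h+1$, i.e.\ $\{k,v\}\in E(Z_h^{(h+1)})$. A symmetric statement applies to the second sum, so every equation is a relation among the target variables only.

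I would then split $E(Z_h^{(h)})$ into two classes and extract each equation by a short distance computation inside $Z_h$. For a \emph{cross-path} edge $\{v_i^{j},v_{h-1-i}^{j'}\}$ with $j\ne j'$ and $0\le i\le h-1$, exactly one $Z_h$-neighbour survives on each side, producing the two-term recurrence
\[
A_{v_i^{j},v_{i+1}^{j}}\,X_{v_{i+1}^{j},v_{h-1-i}^{j'}} \;=\; A_{v_{h-1-i}^{j'},v_{h-i}^{j'}}\,X_{v_i^{j},v_{h-i}^{j'}}.
\]
For a \emph{same-path} edge $\{v_0^{j},v_h^{j}\}$ the second sum is empty, since the only $Z_h$-neighbour $v_{h-1}^{j}$ of $v_h^{j}$ lies at distance $h-1$ from $v_0^{j}$; the first sum contributes the two cycle neighbours of $v_0^{j}$, giving
\[
A_{v_0^{j},v_0^{j'}}\,X_{v_0^{j'},v_h^{j}}+A_{v_0^{j},v_0^{j''}}\,X_{v_0^{j''},v_h^{j}}=0,
\]
where $\{j',j''\}=\{1,2,3\}\setminus\{j\}$.

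Introducing the shorthand $y_i^{jj'}=X_{v_i^{j},v_{h-i}^{j'}}$ for $j<j'$ and $P_j=\prod_{i=0}^{h-1}A_{v_i^{j},v_{i+1}^{j}}$ (nonzero since each factor is supported on an edge of $G$), the cross-path recurrences iterate to $y_h^{jj'}=(P_{j'}/P_j)\,y_0^{jj'}$, reducing the three same-path equations to a homogeneous $3\times 3$ system in $(y_0^{12},y_0^{13},y_0^{23})$. A direct cofactor expansion shows its determinant equals, up to sign, $2\,b_{12}\,b_{13}\,b_{23}\,P_2\,P_3$ with $b_{jj'}=A_{v_0^{j},v_0^{j'}}$ -- a nonzero product of edge-supported entries of $A$. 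Hence every $y_0^{jj'}$ vanishes, and the cross-path recurrences propagate $y_i^{jj'}=0$ to all indices $i$, yielding $X\in\Sc(G_{\ell+1}^c)$.

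The main obstacle is the second step: carefully verifying for each of the $3h+3$ edges in $E(Z_h^{(h)})$ that the hypotheses leave only the claimed terms, and correctly identifying the $Z_h$-neighbours of $u$ and $v$ that sit at distance exactly $h+1$ from the other endpoint. Once the hypotheses route all relevant summation indices into $V(Z_h)$, this reduces to finite distance bookkeeping within the well-understood structure of $Z_h$, after which the recurrence propagation and the $3\times 3$ determinant computation are routine.
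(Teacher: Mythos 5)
Your proposal is correct and follows essentially the same route as the paper: both use exactly the commutator equations at the positions $E(Z_h^{(h)})$, observe via hypothesis~(2) that each is a binomial relation among the variables indexed by $E(Z_h^{(h+1)})$, and verify that the resulting $(3h+3)\times(3h+3)$ homogeneous square system is nonsingular. The only difference is in the final bookkeeping: the paper assembles the full cyclic bidiagonal matrix and evaluates its determinant directly (twice the product of its diagonal entries), whereas you first solve the two-term recurrences along each chain and reduce to a $3\times 3$ determinant; both yield the same nonzero product of edge-supported entries of $A$.
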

\begin{proof}
 Let us name the vertices of $Z_h$, $E(Z_h^{h})\subseteq E(G_{\ell})$, as the following:  let $v_0^1, v_0^2, v_0^3$ be the vertices on the cycle $C_3$ of $Z_h$ and let $P_{(i)}$ be a path on $h$ vertices $v^i_1,\ldots ,v^i_h$ for $i=1,2,3$. Suppose $Z_h$ is constructed from $P_{(1)}\cup P_{(2)}\cup P_{(3)}$ by joining vertices $v_0^i$ and $v_1^i$ by a bridge, $i=1,2,3$.
Let $ I=\{i \in \mathbb{N}_0 :  0\leq i\leq \left\lfloor{\frac{h-1}{2}} \right\rfloor\}$ and $J=\{j \in \mathbb{N}_0 :  0\leq j\leq \left\lfloor{\frac{h}{2}} \right\rfloor\}$. 
Let $$\alpha'=\{\{v_0^1, v_h^1\},\{v_0^2, v_h^2\}, \{v_0^3, v_h^3\}\}$$ and 
$$\alpha_i=\begin{cases}
    \{\{v_i^p,v_{h-1-i}^q\}\colon p,q\in \{1,2,3\}, p \ne q\},& \text{if } i < \frac{h-1}{2},\\
    \{\{v_i^p,v_{i}^q\}\colon p,q\in \{1,2,3\}, p \ne q\},& \text{if } i = \frac{h-1}{2},\\
\end{cases}$$
and let $\alpha= \bigcup \limits_{ i \in I} \alpha_i \cup \alpha'$. In the proof, $\alpha$ will be the set of positions, for which we will consider equation $[A,X]\vert_\alpha=0$.
Moreover, for $p,q\in \{1,2, 3\}$, $p\ne q$, $j\in J$, let us denote the edges $
%e_{p,q}=\{v_0^p,v_h^q\} \, \text{ and } \, 
e^{p,q}_{j}=\{v_j^p,v_{h-j}^q\}$ 
and observe that $e^{p,q}_j=e^{q,p}_{h-j}$. Now, let $$\beta_j=\{e_j^{1,2},e_j^{2,1},e_j^{1,3},e_j^{3,1},e_j^{2,3},e_j^{3,2}\}$$
and note that in particular, when $j=\frac{h}{2}$, we have $\beta_{h/2}=\{e_{h/2}^{1,2},e_{h/2}^{1,3},e_{h/2}^{2,3}\}.$ 
Let $\beta=\bigcup \limits_ {j \in J} \beta_j$ and note that $\beta$ is the set of edges of $E(Z_h^{(h+1)}).$
Observe that by condition (2) of the lemma, all equations coming from $\alpha$ and including variables in $\beta$ are binomial, and so the matrix $M$ corresponding to the system $M\bfx={\bf 0}$, is permutationally equivalent to
\[
\begin{bmatrix}
a_{v_0^1v^3_0} & & & & & & & &a_{v_0^1v^2_0} \\
-a_{v^1_{h-1}v^1_h} &a_{v^3_0v^3_1}  & & & & & & & \\
 & \ddots & \ddots &  & & & & & \\
 &  &a_{v_0^1v^3_0} & a_{v_0^2v^3_0} & & & & & \\ 
 & & & -a_{v_{h-1}^3v^3_h} & a_{v^2_0v^2_1} & & & & \\
 & & & & \ddots & \ddots & & & \\ 
 & & & & &  a_{v_0^2v^3_0}& a_{v_0^1v^2_0} & & \\ 
 & & & & & &  -a_{v^2_{h-1}v^2_{h}}& a_{v^1_{0}v^1_1} & \\ 
 & & & & & & & \ddots & \ddots \\ 
\end{bmatrix}.
\]
Here, the variables $\bfx%=(\bfx_0, \bfx_1,...,\bfx_{h/2})\trans 
$, arise from the edges of $\beta$. Since $|\det(M)|$ is twice the product of its diagonal entries, it follows that $\det(M)\ne 0$ % =-2 \displaystyle \prod_{e_{i,j} \in Z_h} ^{} a_{e_{i,j}}\neq 0$ 
for any choice of $A \in S(G)$. Therefore $\bfx=0$ and $X\in\Sc(G_{\ell+1}^c)$ with $G_{\ell+1}=G_{\ell} +E(Z_h^{(h+1)})$.
\end{proof}

Now, we prove the following theorem, which characterizes unicyclic graphs of girth three in $\gssp$. The idea of the proof follows the concept of the proof for trees in $\gssp$,~\cite[Theorem~4.3]{Lin20SSPgraph} and the result extends the characterization of trees in $\gssp$ to unicyclic graphs of girth $3$. 

\begin{theorem}\label{thm:unicyclic-girth-3-graphs}
    A unicyclic graph of girth three is in $\gssp$ if and only if it is obtained by joining the cycle $C_{3}$ to $k$ paths $P_{n_1},\ldots,P_{n_k}$,  $0\leq k \leq 3$, $n_i\in \bN$, by bridges from distinct vertices of the cycle $C_3$.
\end{theorem}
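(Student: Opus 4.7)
The ``only if'' implication is immediate from Lemma~\ref{lem:unicyclic-not-in-gssp}, and for $k\le 2$ the ``if'' direction is already part of Lemma~\ref{lem:known-ssp-graphs}. Thus the real content is the case $k=3$: $G$ is obtained from a triangle with vertices $v_0^1,v_0^2,v_0^3$ by attaching pendant paths $v_0^i\text{--}v_1^i\text{--}\cdots\text{--}v_{n_i}^i$ for $i=1,2,3$. Without loss of generality I assume $h:=n_1\le n_2\le n_3$, so that $Z_h$ sits as an induced subgraph of $G$. Fix $A\in\calS(G)$ and $X\in\Sc(G^c)$ with $[A,X]=O$; the goal is to build a chain $G=G_0\subsetneq G_1\subsetneq\cdots\subsetneq G_N=K_{n_1+n_2+n_3+3}$ such that $X\in\Sc(G_\ell^c)$ at every stage, whence $X=O$.

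\textbf{Cleaning up each pendant branch.} For each $i$, the induced path $v_{n_i}^i,v_{n_i-1}^i,\ldots,v_1^i,v_0^i$ is attached to the rest of $G$ only through $v_0^i$, so Lemma~\ref{lem:corofcor} forces $X_{v_s^i,v_t^i}=0$ for all $0\le s<t\le n_i$. After this step, the only potentially nonzero entries of $X$ are the cross-branch ones, and in $G_\ell$ every cycle vertex $v_0^k$ is adjacent to all vertices of its own branch.

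\textbf{Inflating $Z_h$ by alternating Lemma~\ref{lem:induced-paths} and Lemma~\ref{lem:rule4}.} For each pair $i\ne j$, the path $P_{ij}\colon v_{n_i}^i,\ldots,v_0^i,v_0^j,\ldots,v_{n_j}^j$ is induced in $G$. The plan is to iterate Lemma~\ref{lem:induced-paths} on each $P_{ij}$ to raise the power along the cross-branch axes. The initial step ($d=1\to 2$) works because the only $G$-neighbor of a vertex of $P_{ij}$ not lying in $V(P_{ij})$ is the third cycle vertex $v_0^k$, which is already adjacent to both $v_0^i$ and $v_0^j$ in $G_\ell$. However, the very next step fails: pushing the power past $v_0^k$ demands connections between $v_0^k$ and the first vertex of an opposite branch, which Lemma~\ref{lem:induced-paths} cannot provide. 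This is exactly the job of Lemma~\ref{lem:rule4}: it exploits the threefold symmetry of $Z_h$ to simultaneously add the missing edges of $E(Z_h^{(h+1)})$. I then interlace the two lemmas---Lemma~\ref{lem:induced-paths} on the three $P_{ij}$ to bump powers along cross-branch axes, and Lemma~\ref{lem:rule4} on $Z_h$ to repair the resulting triangular obstruction---until $G_\ell$ absorbs all of $E(Z_h^{h+1})$, then $E(Z_h^{h+2})$, and so on up to $E(K_{V(Z_h)})$. The overhang vertices on the longer branches are then swept in by final applications of Lemma~\ref{lem:induced-paths} along the full paths $P_{ij}$, whose neighborhood conditions are by then automatically satisfied.

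\textbf{The main obstacle.} The delicate part will not be either lemma in isolation but the bookkeeping required to interleave them correctly, particularly when $n_2$ or $n_3$ strictly exceeds $h$. At each application of Lemma~\ref{lem:rule4}, condition (2) requires that every $G$-neighbor of a $Z_h$-vertex that lies outside $V(Z_h)$ be already $G_\ell$-adjacent to the partner vertex; the ``overhang'' vertices $v_{h+1}^i,\ldots,v_{n_i}^i$ therefore must have been sufficiently connected in $G_\ell$ \emph{before} each use of Lemma~\ref{lem:rule4}. Verifying inductively that each invocation is justified at the moment it is needed---mirroring the inductive bookkeeping of~\cite[Theorem~4.3]{Lin20SSPgraph} for trees, with Lemma~\ref{lem:rule4} replacing the single degree-$3$ meeting point by the triangle $C_3$---is the technical heart of the argument.
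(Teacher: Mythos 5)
Your reduction to the case $k=3$ and your three-phase architecture---Lemma~\ref{lem:corofcor} on each pendant branch, Lemma~\ref{lem:rule4} to grow the powers of $Z_h$, and Lemma~\ref{lem:induced-paths} to absorb the longer branches---coincide with the paper's proof. However, what you have submitted is a plan rather than a proof: you explicitly defer ``the technical heart of the argument,'' namely the inductive verification that the neighbourhood hypotheses of Lemma~\ref{lem:rule4}(2) (and later of Lemma~\ref{lem:induced-paths}) actually hold at the moment each lemma is invoked. That verification is essentially the entire content of the paper's argument for $k=3$, so this is a genuine gap, not a missing detail. For the record, the ``overhang'' worry you raise is discharged by a short case analysis you do not carry out: if $\{u,v\}\in E(Z_h^{(h)})$ and $u=v_0^p$ lies on the cycle, then either $v=v_{h-1}^q$ with $q\ne p$, in which case $N_G[v]\subseteq V(Z_h)$, or $v=v_h^p$, in which case the only vertex of $N_G[v]$ outside $V(Z_h)$ is $v_{h+1}^p$, which already lies in $N_{G_1}[v_0^p]$ because the first phase made $v_0^p$ adjacent to its whole branch; and if neither $u$ nor $v$ is on the cycle, then both have all their $G$-neighbours inside $V(Z_h)$. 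No connectivity of the overhang beyond what Lemma~\ref{lem:corofcor} already provides is ever needed.

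A second, structural issue: the interleaving of Lemma~\ref{lem:induced-paths} on the cross-branch paths $P_{ij}$ with Lemma~\ref{lem:rule4} inside the $Z_h$ phase is unnecessary, and it is exactly where your bookkeeping would become unmanageable. Note that $E(Z_h^{(h+1)})$ is precisely the set of all cross-branch pairs $\{v_s^p,v_t^q\}$ with $p\ne q$ and $s+t=h$, i.e.\ \emph{all} cross-branch pairs at distance $h+1$, while the same-branch pairs at distance $h+1$ are already edges of $G_1$. Hence a single application of Lemma~\ref{lem:rule4} per level takes $G_h\supseteq G^{h}$ to $G_{h+1}\supseteq G^{h+1}$ for $h=1,\dots,n_1$ with no help from Lemma~\ref{lem:induced-paths}; the latter is needed only afterwards, for distances $d$ with $n_1+2\le d\le n_2+n_3+1$, applied to suitable induced paths joining the two longer branches to each other and to the leaf of $P_{n_1}$. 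If you restructure the middle phase this way and actually perform the two inductive verifications, you recover the paper's proof.
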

\begin{proof}
   As written in the first paragraph of this Section, we only need to prove that if $G$ is a graph obtained by joining the cycle $C_{3}$ to three paths $P_{n_1},P_{n_2},P_{n_3}$, $n_i\in \bN$, by bridges from distinct vertices of the cycle $C_3$, then $G\in \gssp$. Without loss of generality, we assume $n_1\leq n_2 \leq n_3.$ Notice that $Z_h$ is a subgraph of $G$, for all $h\in\{1,2,...,n_1\}$, but $Z_{n_1+1}$ is not a subgraph of $G$. 
   
   Suppose $A \in \calS(G)$ is an arbitrary matrix and $X=[x_{i,j}]$ is symmetric matrix such that $[A,X]=O$ and $A \circ X=I \circ X=O.$
   First we apply Lemma~\ref{lem:corofcor} three times, once for each path $P_{n_1}, P_{n_2}, P_{n_3}$, taking $v_1$ to be the leaf of $P_{n_i}, i\in\{1,2,3\}$. We conclude that $X\in \Sc(G_1^c)$ where $G_1$ is obtained from $G$ by adding all possible edges in each of the paths $P_{n_1}, P_{n_2}, P_{n_3}$. For a concrete example when $(n_1, n_2,n_3)=(2,3,4)$, see Figure~\ref{ex:G1}.

   Since $Z_h$ is an induced subgraph of $G$, for all $h=1,...,n_1$, we will inductively use Lemma~\ref{lem:rule4}  to conclude that in each step $X \in \Sc(G_{h+1}^c)$, where $G_{h+1}$ is obtained from $G_h$ by adding all possible edges between the vertices which are on distance exactly $h+1$. Note that in the case $h=1$ we have $Z_1=Z_1^1=Z_1^{(1)}$ and hence part (1) of Lemma~\ref{lem:rule4} is fulfilled. Moreover, if $\{u,v\}\in E(Z_1)$, then $N_G[u]\cap N_{G_{1}}[v]^c \subseteq V(Z_1)$ and $N_G[v]\cap N_{G_{1}}[u]^c \subseteq V(Z_1)$. Namely, if $u$ and $v$ are both vertices of the cycle, then these conditions are trivially fulfilled. If not both of them are on the cycle and since $\{u,v\}\in E(Z_1)$, assume without loss of generality that $u$ is on the cycle and $v$ is not. Note that both vertices $u$ and $v$ are the neighbours of all the vertices on the path they are on, so $N_G[u]\cap N_{G_{1}}[v]^c \subseteq V(Z_1)$ and $N_G[v]\cap N_{G_{1}}[u]^c=\emptyset$.  Thus part (2) of Lemma~\ref{lem:rule4} is true, and we have $X\in \Sc(G_{2})$, where $G_2=G_1+E(Z_1^{(2)})$.
    Suppose now $2 \leq h\leq n_1$ is arbitrary. Notice that conditions $E(Z_h^h)\subseteq E(G_h)$ and $E(Z_h^{(h+1)})\cap E(G_h)=\emptyset$ are satisfied because we inductively add edges between vertices at distance at most $h+1$ to $G_h$ in each step. Let us now check the second condition of Lemma~\ref{lem:rule4}. Suppose $\{u,v\} \in E(Z_h^{(h)})$. If neither $u$ nor $v$ is a vertex on the cycle, then since they are on distance $h$, they are not the leaves on the paths $P_{n_1}, P_{n_2}, P_{n_3}$  of graph $G$ nor the leaves of $Z_h$. This means $N_G[u], N_G[v] \subseteq V(Z_h)$ and so part (2) is true. Suppose now $u\in V(C_3)$, which implies $N_G[u]\subseteq V(Z_h)$. If the shortest path from $u$ to $v$ contains another vertex of $C_3$, then $N_G[v]\subseteq V(Z_h)$. Otherwise, there exists $z\in N_G[v]\setminus V(Z_h)$ at distance $h+1$ from $u$, but $\{u,z\}\in E(G_1)\subseteq E(G_h)$. Hence $N_G[v]\cap N_{G_h}[u]^c=\emptyset$. In either case (2) from Lemma~\ref{lem:rule4} is fulfilled and hence $x_{i,j}=0$ for all $i, j \in V(G)$ which are in $G$ at distance at most $h+1$, i.e. for all $\{i,j\}\in E(G_{h+1})$. At the end of this induction, we have that $X \in \Sc(G_{n_1+1}^c)$, so $G^{n_1+1}\subseteq G_{n_1+1}$. 
   For an example of two steps in the induction see Figures~\ref{ex:G2} and~\ref{ex:G3} when $(n_1, n_2,n_3)=(2,3,4)$.
 
 \begin{center}
 \begin{figure}[htb]
 \centering
 \subfigure	[$G_0=G$]{
    \begin{tikzpicture}[scale=\exscale,transform shape]

\node[label={right:$v_0^1$}] (v01)   at (90:1) {}; 
\node[label={above:$v_0^2$}] (v02)   at (210:1) {}; 
\node[label={above:$v_0^3$}] (v03)   at (330:1) {}; 

\draw (v01) -- (v02) -- (v03)--(v01);

\node[label={right:$v_1^1$}] (v11)   at (90:2) {}; 
\node[label={above:$v_1^2$}] (v12)   at (210:2) {}; 
\node[label={above:$v_1^3$}] (v13)   at (330:2) {}; 
\node[label={right:$v_2^1$}] (v21)   at (90:3) {}; 
\node[label={above:$v_2^2$}] (v22)   at (210:3) {}; 
\node[label={above:$v_2^3$}] (v23)   at (330:3) {}; 
\node[label={right:$v_3^1$}] (v31)   at (90:4) {}; 
\node[label={above:$v_3^2$}] (v32)   at (210:4) {}; 
\node[label={right:$v_4^1$}] (v41)   at (90:5) {}; 

\draw (v01) -- (v11) -- (v21) -- (v31) -- (v41);
\draw (v02) -- (v12) -- (v22) -- (v32);
\draw (v03) -- (v13) -- (v23);
    \end{tikzpicture}\label{ex:G}}
    \qquad
  \subfigure	[$G_1$]{
    \begin{tikzpicture}[scale=\exscale,transform shape]

\node[label={right:$v_0^1$}] (v01)   at (90:1) {}; 
\node[label={above:$v_0^2$}] (v02)   at (210:1) {}; 
\node[label={above:$v_0^3$}] (v03)   at (330:1) {}; 

\draw (v01) -- (v02) -- (v03)--(v01);

\node[label={right:$v_1^1$}] (v11)   at (90:2) {}; 
\node[label={above:$v_1^2$}] (v12)   at (210:2) {}; 
\node[label={above:$v_1^3$}] (v13)   at (330:2) {}; 
\node[label={right:$v_2^1$}] (v21)   at (90:3) {}; 
\node[label={above:$v_2^2$}] (v22)   at (210:3) {}; 
\node[label={above:$v_2^3$}] (v23)   at (330:3) {}; 
\node[label={right:$v_3^1$}] (v31)   at (90:4) {}; 
\node[label={above:$v_3^2$}] (v32)   at (210:4) {}; 
\node[label={right:$v_4^1$}] (v41)   at (90:5) {}; 

\draw (v01) -- (v11) -- (v21) -- (v31) -- (v41);
\draw (v02) -- (v12) -- (v22) -- (v32);
\draw (v03) -- (v13) -- (v23);

\begin{scope}[blue]
\draw (v01) to[bend left] (v21) to[bend left] (v41);
\draw (v11) to[bend left] (v31);
\draw (v01) to[bend left] (v31);
\draw (v11) to[bend left] (v41);
\draw (v01) to[bend left] (v41);
\draw (v02) to[bend left] (v22);
\draw (v12) to[bend left] (v32);
\draw (v02) to[bend left] (v32);
\draw (v03) to[bend left] (v23);
\end{scope}
    \end{tikzpicture}   \label{ex:G1}} 
    \qquad
   \subfigure	[$G_2$]{
    \begin{tikzpicture}[scale=\exscale,transform shape]

\node[label={right:$v_0^1$}] (v01)   at (90:1) {}; 
\node[label={above:$v_0^2$}] (v02)   at (210:1) {}; 
\node[label={above:$v_0^3$}] (v03)   at (330:1) {}; 

\draw (v01) -- (v02) -- (v03)--(v01);

\node[label={right:$v_1^1$}] (v11)   at (90:2) {}; 
\node[label={above:$v_1^2$}] (v12)   at (210:2) {}; 
\node[label={above:$v_1^3$}] (v13)   at (330:2) {}; 
\node[label={right:$v_2^1$}] (v21)   at (90:3) {}; 
\node[label={above:$v_2^2$}] (v22)   at (210:3) {}; 
\node[label={above:$v_2^3$}] (v23)   at (330:3) {}; 
\node[label={right:$v_3^1$}] (v31)   at (90:4) {}; 
\node[label={above:$v_3^2$}] (v32)   at (210:4) {}; 
\node[label={right:$v_4^1$}] (v41)   at (90:5) {}; 

\draw (v01) -- (v11) -- (v21) -- (v31) -- (v41);
\draw (v02) -- (v12) -- (v22) -- (v32);
\draw (v03) -- (v13) -- (v23);

\begin{scope}[gray]
\draw (v01) to[bend left] (v21) to[bend left] (v41);
\draw (v11) to[bend left] (v31);
\draw (v01) to[bend left] (v31);
\draw (v11) to[bend left] (v41);
\draw (v01) to[bend left] (v41);
\draw (v02) to[bend left] (v22);
\draw (v12) to[bend left] (v32);
\draw (v02) to[bend left] (v32);
\draw (v03) to[bend left] (v23);
\end{scope}

\begin{scope}[blue]
\draw (v01) -- (v12) -- (v03) -- (v11) -- (v02) -- (v13) -- (v01);
\end{scope}
   \end{tikzpicture}   \label{ex:G2}} 
   \qquad
   \subfigure	[$G_3$]{
    \begin{tikzpicture}[scale=\exscale,transform shape]

\node[label={right:$v_0^1$}] (v01)   at (90:1) {}; 
\node[label={above:$v_0^2$}] (v02)   at (210:1) {}; 
\node[label={above:$v_0^3$}] (v03)   at (330:1) {}; 

\draw (v01) -- (v02) -- (v03)--(v01);

\node[label={right:$v_1^1$}] (v11)   at (90:2) {}; 
\node[label={above:$v_1^2$}] (v12)   at (210:2) {}; 
\node[label={above:$v_1^3$}] (v13)   at (330:2) {}; 
\node[label={right:$v_2^1$}] (v21)   at (90:3) {}; 
\node[label={above:$v_2^2$}] (v22)   at (210:3) {}; 
\node[label={above:$v_2^3$}] (v23)   at (330:3) {}; 
\node[label={right:$v_3^1$}] (v31)   at (90:4) {}; 
\node[label={above:$v_3^2$}] (v32)   at (210:4) {}; 
\node[label={right:$v_4^1$}] (v41)   at (90:5) {}; 

\draw (v01) -- (v11) -- (v21) -- (v31) -- (v41);
\draw (v02) -- (v12) -- (v22) -- (v32);
\draw (v03) -- (v13) -- (v23);

\begin{scope}[gray]
\draw (v01) to[bend left] (v21) to[bend left] (v41);
\draw (v11) to[bend left] (v31);
\draw (v01) to[bend left] (v31);
\draw (v11) to[bend left] (v41);
\draw (v01) to[bend left] (v41);
\draw (v02) to[bend left] (v22);
\draw (v12) to[bend left] (v32);
\draw (v02) to[bend left] (v32);
\draw (v03) to[bend left] (v23);
\draw (v01) -- (v12) -- (v03) -- (v11) -- (v02) -- (v13) -- (v01);
\end{scope}

\begin{scope}[blue]
\draw (v01) -- (v22) -- (v03) -- (v21) -- (v02) -- (v23) -- (v01);
\draw (v11) -- (v12) -- (v13) -- (v11);
\end{scope}

    \end{tikzpicture}   \label{ex:G3}} 
    \qquad
       \subfigure	[$G_4$]{
    \begin{tikzpicture}[scale=\exscale,transform shape]

\node[label={right:$v_0^1$}] (v01)   at (90:1) {}; 
\node[label={above:$v_0^2$}] (v02)   at (210:1) {}; 
\node[label={above:$v_0^3$}] (v03)   at (330:1) {}; 

\draw (v01) -- (v02) -- (v03)--(v01);

\node[label={right:$v_1^1$}] (v11)   at (90:2) {}; 
\node[label={above:$v_1^2$}] (v12)   at (210:2) {}; 
\node[label={above:$v_1^3$}] (v13)   at (330:2) {}; 
\node[label={right:$v_2^1$}] (v21)   at (90:3) {}; 
\node[label={above:$v_2^2$}] (v22)   at (210:3) {}; 
\node[label={above:$v_2^3$}] (v23)   at (330:3) {}; 
\node[label={right:$v_3^1$}] (v31)   at (90:4) {}; 
\node[label={above:$v_3^2$}] (v32)   at (210:4) {}; 
\node[label={right:$v_4^1$}] (v41)   at (90:5) {}; 

\draw (v01) -- (v11) -- (v21) -- (v31) -- (v41);
\draw (v02) -- (v12) -- (v22) -- (v32);
\draw (v03) -- (v13) -- (v23);

\begin{scope}[gray]
\draw (v01) to[bend left] (v21) to[bend left] (v41);
\draw (v11) to[bend left] (v31);
\draw (v01) to[bend left] (v31);
\draw (v11) to[bend left] (v41);
\draw (v01) to[bend left] (v41);
\draw (v02) to[bend left] (v22);
\draw (v12) to[bend left] (v32);
\draw (v02) to[bend left] (v32);
\draw (v03) to[bend left] (v23);
\draw (v01) -- (v12) -- (v03) -- (v11) -- (v02) -- (v13) -- (v01);

\draw (v01) -- (v22) -- (v03) -- (v21) -- (v02) -- (v23) -- (v01);
\draw (v11) -- (v12) -- (v13) -- (v11);
\end{scope}

\begin{scope}[blue]
\draw (v12) -- (v21) -- (v13) -- (v22) -- (v11) -- (v23) -- (v12);
\draw (v01) -- (v32) -- (v03) -- (v31) -- (v02);
\end{scope}
    \end{tikzpicture}   \label{ex:G4}} 
    \qquad
      \subfigure	[$G_5$]{
    \begin{tikzpicture}[scale=\exscale,transform shape]

\node[label={right:$v_0^1$}] (v01)   at (90:1) {}; 
\node[label={above:$v_0^2$}] (v02)   at (210:1) {}; 
\node[label={above:$v_0^3$}] (v03)   at (330:1) {}; 

\draw (v01) -- (v02) -- (v03)--(v01);

\node[label={right:$v_1^1$}] (v11)   at (90:2) {}; 
\node[label={above:$v_1^2$}] (v12)   at (210:2) {}; 
\node[label={above:$v_1^3$}] (v13)   at (330:2) {}; 
\node[label={right:$v_2^1$}] (v21)   at (90:3) {}; 
\node[label={above:$v_2^2$}] (v22)   at (210:3) {}; 
\node[label={above:$v_2^3$}] (v23)   at (330:3) {}; 
\node[label={right:$v_3^1$}] (v31)   at (90:4) {}; 
\node[label={above:$v_3^2$}] (v32)   at (210:4) {}; 
\node[label={right:$v_4^1$}] (v41)   at (90:5) {}; 

\draw (v01) -- (v11) -- (v21) -- (v31) -- (v41);
\draw (v02) -- (v12) -- (v22) -- (v32);
\draw (v03) -- (v13) -- (v23);

\begin{scope}[gray]
\draw (v01) to[bend left] (v21) to[bend left] (v41);
\draw (v11) to[bend left] (v31);
\draw (v01) to[bend left] (v31);
\draw (v11) to[bend left] (v41);
\draw (v01) to[bend left] (v41);
\draw (v02) to[bend left] (v22);
\draw (v12) to[bend left] (v32);
\draw (v02) to[bend left] (v32);
\draw (v03) to[bend left] (v23);
\draw (v01) -- (v12) -- (v03) -- (v11) -- (v02) -- (v13) -- (v01);

\draw (v01) -- (v22) -- (v03) -- (v21) -- (v02) -- (v23) -- (v01);
\draw (v11) -- (v12) -- (v13) -- (v11);

\draw (v12) -- (v21) -- (v13) -- (v22) -- (v11) -- (v23) -- (v12);
\draw (v01) -- (v32) -- (v03) -- (v31) -- (v02);
\end{scope}
\begin{scope}[blue]
\draw (v03) -- (v41) -- (v02);
\draw (v32) -- (v13) -- (v31) -- (v12);
\draw (v22) -- (v23) -- (v21) -- (v22);
\draw          (v11) -- (v32);
\end{scope}
    \end{tikzpicture}   \label{ex:G5}}
        \qquad
      \subfigure	[$G_6$]{
    \begin{tikzpicture}[scale=\exscale,transform shape]
\node[label={right:$v_0^1$}] (v01)   at (90:1) {}; 
\node[label={above:$v_0^2$}] (v02)   at (210:1) {}; 
\node[label={above:$v_0^3$}] (v03)   at (330:1) {}; 

\draw (v01) -- (v02) -- (v03)--(v01);

\node[label={right:$v_1^1$}] (v11)   at (90:2) {}; 
\node[label={above:$v_1^2$}] (v12)   at (210:2) {}; 
\node[label={above:$v_1^3$}] (v13)   at (330:2) {}; 
\node[label={right:$v_2^1$}] (v21)   at (90:3) {}; 
\node[label={above:$v_2^2$}] (v22)   at (210:3) {}; 
\node[label={above:$v_2^3$}] (v23)   at (330:3) {}; 
\node[label={right:$v_3^1$}] (v31)   at (90:4) {}; 
\node[label={above:$v_3^2$}] (v32)   at (210:4) {}; 
\node[label={right:$v_4^1$}] (v41)   at (90:5) {}; 

\draw (v01) -- (v11) -- (v21) -- (v31) -- (v41);
\draw (v02) -- (v12) -- (v22) -- (v32);
\draw (v03) -- (v13) -- (v23);

\begin{scope}[gray]
\draw (v01) to[bend left] (v21) to[bend left] (v41);
\draw (v11) to[bend left] (v31);
\draw (v01) to[bend left] (v31);
\draw (v11) to[bend left] (v41);
\draw (v01) to[bend left] (v41);
\draw (v02) to[bend left] (v22);
\draw (v12) to[bend left] (v32);
\draw (v02) to[bend left] (v32);
\draw (v03) to[bend left] (v23);
\draw (v01) -- (v12) -- (v03) -- (v11) -- (v02) -- (v13) -- (v01);

\draw (v01) -- (v22) -- (v03) -- (v21) -- (v02) -- (v23) -- (v01);
\draw (v11) -- (v12) -- (v13) -- (v11);

\draw (v12) -- (v21) -- (v13) -- (v22) -- (v11) -- (v23) -- (v12);
\draw (v01) -- (v32) -- (v03) -- (v31) -- (v02);

\draw (v03) -- (v41) -- (v02);
\draw (v32) -- (v13) -- (v31) -- (v12);
\draw (v22) -- (v23) -- (v21) -- (v22);
\draw          (v11) -- (v32);
\end{scope}

\begin{scope}[blue]
\draw (v21) -- (v32) -- (v23) -- (v31) -- (v22);
\draw (v13) -- (v41) -- (v12);
\end{scope}
    \end{tikzpicture}   \label{ex:G6}}
    \qquad
        \subfigure	[$G_7$]{
    \begin{tikzpicture}[scale=\exscale,transform shape]

\node[label={right:$v_0^1$}] (v01)   at (90:1) {}; 
\node[label={above:$v_0^2$}] (v02)   at (210:1) {}; 
\node[label={above:$v_0^3$}] (v03)   at (330:1) {}; 

\draw (v01) -- (v02) -- (v03)--(v01);

\node[label={right:$v_1^1$}] (v11)   at (90:2) {}; 
\node[label={above:$v_1^2$}] (v12)   at (210:2) {}; 
\node[label={above:$v_1^3$}] (v13)   at (330:2) {}; 
\node[label={right:$v_2^1$}] (v21)   at (90:3) {}; 
\node[label={above:$v_2^2$}] (v22)   at (210:3) {}; 
\node[label={above:$v_2^3$}] (v23)   at (330:3) {}; 
\node[label={right:$v_3^1$}] (v31)   at (90:4) {}; 
\node[label={above:$v_3^2$}] (v32)   at (210:4) {}; 
\node[label={right:$v_4^1$}] (v41)   at (90:5) {}; 

\draw (v01) -- (v11) -- (v21) -- (v31) -- (v41);
\draw (v02) -- (v12) -- (v22) -- (v32);
\draw (v03) -- (v13) -- (v23);

\begin{scope}[gray]
\draw (v01) to[bend left] (v21) to[bend left] (v41);
\draw (v11) to[bend left] (v31);
\draw (v01) to[bend left] (v31);
\draw (v11) to[bend left] (v41);
\draw (v01) to[bend left] (v41);
\draw (v02) to[bend left] (v22);
\draw (v12) to[bend left] (v32);
\draw (v02) to[bend left] (v32);
\draw (v03) to[bend left] (v23);
\draw (v01) -- (v12) -- (v03) -- (v11) -- (v02) -- (v13) -- (v01);

\draw (v01) -- (v22) -- (v03) -- (v21) -- (v02) -- (v23) -- (v01);
\draw (v11) -- (v12) -- (v13) -- (v11);

\draw (v12) -- (v21) -- (v13) -- (v22) -- (v11) -- (v23) -- (v12);
\draw (v01) -- (v32) -- (v03) -- (v31) -- (v02);

\draw (v03) -- (v41) -- (v02);
\draw (v32) -- (v13) -- (v31) -- (v12);
\draw (v22) -- (v23) -- (v21) -- (v22);
\draw          (v11) -- (v32);

\draw (v21) -- (v32) -- (v23) -- (v31) -- (v22);
\draw (v13) -- (v41) -- (v12);
\end{scope}

\begin{scope}[blue]
\draw (v32) -- (v31);
\draw (v23) -- (v41) -- (v22);
\end{scope}
    \end{tikzpicture}   \label{ex:G7}}
    \qquad
     \subfigure	[$G_8=K_{12}$]{
    \begin{tikzpicture}[scale=\exscale,transform shape]

\node[label={right:$v_0^1$}] (v01)   at (90:1) {}; 
\node[label={above:$v_0^2$}] (v02)   at (210:1) {}; 
\node[label={above:$v_0^3$}] (v03)   at (330:1) {}; 

\draw (v01) -- (v02) -- (v03)--(v01);

\node[label={right:$v_1^1$}] (v11)   at (90:2) {}; 
\node[label={above:$v_1^2$}] (v12)   at (210:2) {}; 
\node[label={above:$v_1^3$}] (v13)   at (330:2) {}; 
\node[label={right:$v_2^1$}] (v21)   at (90:3) {}; 
\node[label={above:$v_2^2$}] (v22)   at (210:3) {}; 
\node[label={above:$v_2^3$}] (v23)   at (330:3) {}; 
\node[label={right:$v_3^1$}] (v31)   at (90:4) {}; 
\node[label={above:$v_3^2$}] (v32)   at (210:4) {}; 
\node[label={right:$v_4^1$}] (v41)   at (90:5) {}; 

\draw (v01) -- (v11) -- (v21) -- (v31) -- (v41);
\draw (v02) -- (v12) -- (v22) -- (v32);
\draw (v03) -- (v13) -- (v23);

\begin{scope}[gray]
\draw (v01) to[bend left] (v21) to[bend left] (v41);
\draw (v11) to[bend left] (v31);
\draw (v01) to[bend left] (v31);
\draw (v11) to[bend left] (v41);
\draw (v01) to[bend left] (v41);
\draw (v02) to[bend left] (v22);
\draw (v12) to[bend left] (v32);
\draw (v02) to[bend left] (v32);
\draw (v03) to[bend left] (v23);
\draw (v01) -- (v12) -- (v03) -- (v11) -- (v02) -- (v13) -- (v01);

\draw (v01) -- (v22) -- (v03) -- (v21) -- (v02) -- (v23) -- (v01);
\draw (v11) -- (v12) -- (v13) -- (v11);

\draw (v12) -- (v21) -- (v13) -- (v22) -- (v11) -- (v23) -- (v12);
\draw (v01) -- (v32) -- (v03) -- (v31) -- (v02);

\draw (v03) -- (v41) -- (v02);
\draw (v32) -- (v13) -- (v31) -- (v12);
\draw (v22) -- (v23) -- (v21) -- (v22);
\draw          (v11) -- (v32);

\draw (v21) -- (v32) -- (v23) -- (v31) -- (v22);
\draw (v13) -- (v41) -- (v12);
\draw (v32) -- (v31);
\draw (v23) -- (v41) -- (v22);
\end{scope}

\begin{scope}[blue]
\draw (v41) -- (v32);
\end{scope}
    \end{tikzpicture}   \label{ex:G8}}
    \caption{Graph $G$ and its supergraphs $G_{\ell}$, $\ell\in [8]$, illustrating the steps in the proof of Theorem~\ref{thm:unicyclic-girth-3-graphs}. The blue edges in $E(G_{\ell})$, $\ell\in [8]$, are the edges in $E(G_{\ell})\setminus E(G_{\ell-1})$, and the grey edges are the ones already in $ E(G_{\ell-1})$.}\label{fig:G}
\end{figure}
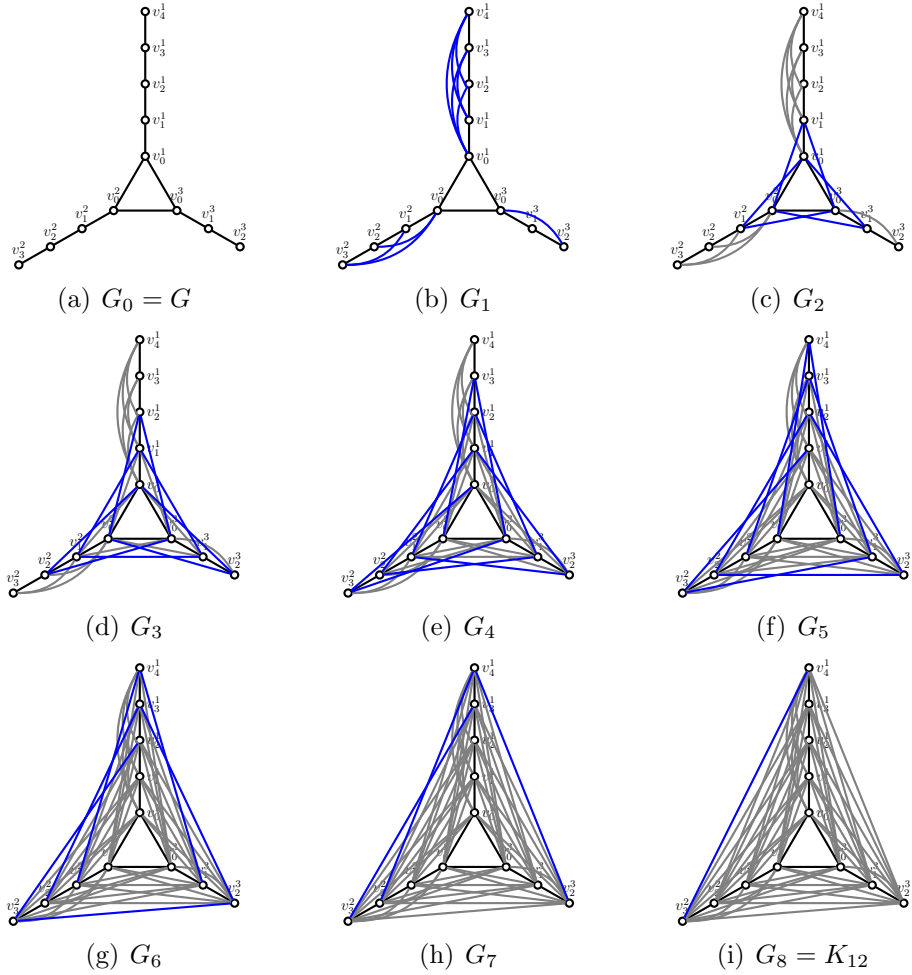
\end{center}
 Now, we repeatedly use Lemma~\ref{lem:induced-paths} to conclude that  $X=O$. First, we will add all pairs of vertices at the distance $n_1+2$ to $E(G_{n_1+1})$. Suppose that $v_i$ is the vertex of $P_{n_i}$, $i\in \{2,3\}$, such that its distance from the cycle $C_3$ is exactly $n_1+1.$ Let $Q_i$ be the path from vertex $v_i$ to the leaf of $P_{n_1}$. We apply Lemma~\ref{lem:induced-paths} to $Q_i$ with $d=n_1+1$, which implies that for all edges $\{u,v\}$, where $u$ and $v$ are at distance $n_1+2$ on that path, we have $x_{u,v}=0$. %are included in $G_{n_1+2}$. \sa{change notation}
 Now, observe the path $Q$ from $v_2$ to $v_3$ and again apply Lemma~\ref{lem:induced-paths} to path $Q$ and $d=n_1+1$. It implies that for all edges $\{u,v\}$, where $u$ and $v$ are at distance $n_1+2$ on $Q$, we have $x_{u,v}=0$. % are included in $G_{\ell_2}$.\sa{notation}
Now, we define $G_{n_1+2}$ to be a supergraph of $G_{n_1+1}$ with all edges at distance $n_1+2$ added, so $G^{n_1+2}\subseteq G_{n_1+2}$. See Figure~\ref{ex:G4} as an example. Repeating this same process for all values $d$, $n_1+2\leq d\leq n_2+n_3+1$, we conclude that for $n=n_1+n_2+n_3+3, X \in \Sc(K_n^c)$, so $X=O$. For $d\in\{5,6,7,8\}$ in our working example see Figures~\ref{ex:G5}-\ref{ex:G8}.
\end{proof}

%%%%%%%%%%%%%%%%%%%%%%%
\section{Unicyclic graphs of girth four}\label{sec:girth-4}

The smallest graph of girth four, cycle $C_4$, is not an SSP graph as its adjacency matrix does not have the SSP. Hence, not all graphs described in Lemma~\ref{lem:unicyclic-not-in-gssp} are in $\gssp$. But on the other hand, the tadpole graph $T_{4,1}$ is an SSP graph,~\cite[Example 3.17]{Lin20SSPgraph}. In this Section, we prove that the tadpole graph $T_{4,n}$ is an SSP graph for all $n\geq 1$. We will provide several examples of unicyclic graphs of girth four that are not SSP graphs and thus showing that the necessary conditions for a girth four unicyclic graph to be in $\gssp$ from Lemma~\ref{lem:unicyclic-not-in-gssp} are not sufficient. 

 \subsection{Tadpole graphs $T_{4,n}$}\label{sec:girth-4-SSP}

In this subsection, we show that every tadpole graph of girth four is in $\gssp$.

\begin{theorem}\label{t4nssp}
Graph $T_{4,n}$  is an SSP graph, for any $n\in \mathbb{N}$.
\end{theorem}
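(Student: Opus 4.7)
Label the cycle vertices of $T_{4,n}$ by $v_1, v_2, v_3, v_4$ in cyclic order, with $v_1$ connected via the bridge $\{v_1, u_1\}$ to the pendant path $u_1, u_2, \ldots, u_n$. Fix $A \in \calS(T_{4,n})$ and a symmetric matrix $X$ with $A \circ X = I \circ X = O$ and $[A, X] = O$; the goal is to show $X = O$.

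The first move is to apply Lemma~\ref{lem:corofcor} to the induced path $P := u_n, u_{n-1}, \ldots, u_1, v_1$ of $T_{4,n}$. Since $v_1$ is the only vertex of $P$ with neighbors outside $V(P)$ (namely $v_2$ and $v_4$), the lemma forces $x_{i,j} = 0$ for every pair $i, j \in V(P)$ that is not adjacent along $P$. Concretely, $x_{u_i, u_j} = 0$ for all distinct $i, j \in [n]$, and $x_{v_1, u_j} = 0$ for every $j \in [n]$. The surviving unknowns of $X$ are then $x_{v_1, v_3}$, $x_{v_2, v_4}$, and $x_{v_k, u_j}$ for $k \in \{2,3,4\}$ and $j \in [n]$: a total of $3n+2$ entries.

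Following the template of Example~\ref{ex:t51ssp}, the plan is to select $3n+2$ rows of the reduced SSP verification matrix so that the resulting square submatrix has nonzero determinant for every admissible $A$. A natural choice retains, for each $j \in [n]$, the three commutator equations indexed by $(v_1, u_j), (v_2, u_j), (v_4, u_j)$, together with the cycle equations $[A, X]_{v_2, v_3} = 0$ and $[A, X]_{v_1, v_3} = 0$. After the first step, the bridge equation $[A, X]_{v_1, u_j} = 0$ collapses to
\[
a_{v_1, v_2}\, x_{v_2, u_j} + a_{v_1, v_4}\, x_{v_4, u_j} = 0,
\]
tying $x_{v_4, u_j}$ to $x_{v_2, u_j}$ by a nonzero multiplicative constant. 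Ordering the columns $(x_{v_2, u_j}, x_{v_3, u_j}, x_{v_4, u_j})$ by decreasing $j$ and placing the cycle columns $x_{v_1, v_3}, x_{v_2, v_4}$ last, the system matrix acquires a block-tridiagonal shape whose $3 \times 3$ diagonal blocks are controlled by the cycle weights $a_{v_1, v_2}, a_{v_1, v_4}, a_{v_2, v_3}, a_{v_3, v_4}$ and by diagonal differences $a_{v_k, v_k} - a_{u_j, u_j}$, while the off-diagonal coupling blocks are scalar multiples of the path weights $a_{u_j, u_{j+1}}$.

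The main obstacle is the cycle block. The two edge equations $[A, X]_{v_2, v_3} = 0$ and $[A, X]_{v_3, v_4} = 0$ alone yield a $2 \times 2$ system in $(x_{v_1, v_3}, x_{v_2, v_4})$ whose determinant $a_{v_1, v_2} a_{v_2, v_3} - a_{v_1, v_4} a_{v_3, v_4}$ is not forced to be nonzero, and $[A, X]_{v_2, v_4} = (a_{v_2, v_2} - a_{v_4, v_4})\, x_{v_2, v_4} = 0$ is vacuous as soon as the two diagonal entries coincide. The key to dodging both degeneracies is the non-edge equation
\[
[A, X]_{v_1, v_3} = (a_{v_1, v_1} - a_{v_3, v_3})\, x_{v_1, v_3} + a_{v_1, u_1}\, x_{v_3, u_1} = 0,
\]
whose coefficient of $x_{v_3, u_1}$ is the nonzero bridge weight $a_{v_1, u_1}$; it couples the cycle block to the path block at $j = 1$. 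A careful cofactor expansion along the rows and columns associated to $j = 1$ and to the cycle then expresses the total determinant as a nonzero product of edge entries of $A$, containing the factor $a_{v_1, u_1}$, the cycle weights, and the path weights $a_{u_j, u_{j+1}}$ for $j \in [n-1]$. Hence the chosen $(3n+2) \times (3n+2)$ submatrix is invertible for every $A \in \calS(T_{4,n})$, and so $X = O$.
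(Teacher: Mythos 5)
Your setup is sound: the application of Lemma~\ref{lem:corofcor} to the pendant path together with $v_1$, the count of $3n+2$ surviving unknowns, and the reduction of $[A,X]_{v_1,u_j}=0$ to $a_{v_1,v_2}x_{v_2,u_j}+a_{v_1,v_4}x_{v_4,u_j}=0$ all check out and agree with the paper's opening moves. The gap is the final claim that your $(3n+2)\times(3n+2)$ submatrix has determinant equal to a nonzero product of edge entries of $A$: it does not, and it is in fact singular for some admissible $A$. The two columns belonging to $x_{v_1,v_3}$ and $x_{v_2,v_4}$ are supported only on the two rows $(v_2,v_3)$ and $(v_1,v_3)$ that you kept, because none of the rows $(v_1,u_j),(v_2,u_j),(v_4,u_j)$ sees either variable (e.g.\ $x_{v_2,v_4}$ enters $[A,X]_{v_2,u_j}$ only multiplied by $a_{v_4,u_j}=0$). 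Generalized Laplace expansion along these two columns therefore forces the factorization
$$\det(\Psi)=\pm\det\begin{pmatrix} a_{v_1,v_2} & -a_{v_3,v_4}\\ a_{v_1,v_1}-a_{v_3,v_3} & 0\end{pmatrix}\cdot\det R=\pm\,a_{v_3,v_4}\,(a_{v_1,v_1}-a_{v_3,v_3})\,\det R,$$
which vanishes whenever $a_{v_1,v_1}=a_{v_3,v_3}$. The residual block $R$ is not a monomial either: already for $n=1$ one computes $\det R=a_{v_1,v_2}a_{v_2,v_3}(a_{v_4,v_4}-a_{u_1,u_1})+a_{v_1,v_4}a_{v_3,v_4}(a_{v_2,v_2}-a_{u_1,u_1})$. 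So the ``single invertible minor'' template of Example~\ref{ex:t51ssp}, which does work for $T_{5,1}$, does not transfer to girth four with this choice of rows.

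The difficulty you correctly identified (the $2\times 2$ cycle system with determinant $a_{v_1,v_2}a_{v_2,v_3}-a_{v_1,v_4}a_{v_3,v_4}$ not being forced nonzero) is exactly the obstruction the paper confronts, but it cannot be dodged by one fixed minor. The paper's proof is driven into a case split: when $a_{1,2}a_{1,4}-a_{2,3}a_{3,4}\ne 0$ a $5\times 5$ minor near the cycle kills the five variables closest to it, while in the degenerate case one must use equations reaching two or three vertices down the pendant path (a $10\times 10$, resp.\ $8\times 8$, minor involving $u_2$ and $u_3$), and only afterwards are the remaining $x_{v_k,u_j}$ propagated to zero inductively via Lemma~\ref{lem:rule1}. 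To repair your argument you would need either such a case analysis, or an argument that the row space of the full verification matrix (not one fixed square submatrix) has full column rank for every $A$; as written, the decisive cofactor-expansion step is asserted but fails.
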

\begin{proof}
Observe first that the statement for $n=1$ was proved in \cite[Example~3.17]{Lin20SSPgraph}. Suppose now $n\geq 2$ and let $$E(T_{4,n})=\{\{i,i+1\}\colon i \in [n+3]\} \; \cup \; \{\{1,4\}\}.$$
Now, let $A=[a_{i,j}]\in \calS(T_{4,n})$ and $X=[x_{i,j}]\in \Sc(T_{4,n}^c)$.
Observe that by Lemma~\ref{lem:corofcor} we have $x_{i,j}=0$ for all $4\leq i \leq j \leq n+4,$ so 
 $$X=
 \begin{pmatrix}
0&0&x_{1,3}&0&x_{1,5}&...&x_{1,n+4}\\
 0& 0 &0&x_{2,4}&x_{2,5}&...&x_{2,n+4}\\
 x_{1,3}&0 & 0 &0&x_{3,5}&...&x_{3,n+4}\\
0& x_{2,4}& 0& 0 & 0 &...&0\\
\vdots& \vdots& \vdots & \vdots& \vdots& & \vdots \\
x_{1,n+4}& x_{2,n+4}&x_{3,n+4} &0 & 0& \cdots & 0
\end{pmatrix}.$$
To show that all the entries of matrix $X$ are equal to zero, we first consider a subset of equations $[A, X]_{i,j}=0$ where $\{i, j\} \in \alpha_0=\{\{1,4\}, \{1, 2\}$ $\{2, 3\}, \{2, 4\}, \{4, 5\}\}$. Verify that the only variables in the five mentioned equations are $x_{1,3}$, $x_{1,5}$, $x_{2,4}$, $x_{2,5}$, $x_{3,5}$, so the corresponding matrix of the system of five equations on $\alpha_0$ is
$$\Psi_5=\left(
\begin{array}{ccccc}
 -a_{3,4} & -a_{4,5} & a_{1,2} & 0 & 0 \\
 -a_{2,3} & 0 & a_{1,4} & 0 & 0 \\
 a_{1,2} & 0 & -a_{3,4} & 0 & 0 \\
 0 & 0 & a_{2,2}-a_{4,4} & -a_{4,5} & 0 \\
 0 & a_{1,4} & 0 & 0 & a_{3,4} 
\end{array}
\right),$$
and it is a submatrix a verification matrix $\Psi_A$ of system $[A,X]=O$, and $\Psi_A[\alpha_0,\colon]=\begin{pmatrix}
    \Psi_5 & O
\end{pmatrix}$.
Note that $\det(\Psi_5)=(a_{1,2}a_{1,4}-a_{2,3}a_{3,4})a_{4,5}^2a_{3,4}$. We will consider two cases.

Assume first that $a_{1,2}a_{1,4}-a_{2,3}a_{3,4} \neq 0$, which implies $\det(\Psi_5)\ne 0$, and so $\Psi_5\bfx={\bf 0}$ has only trivial solution $x_{1,3}=x_{2,4}=x_{1,5}=x_{2,5}=x_{3,5}=0$.
In the case when $a_{1,2}a_{1,4}-a_{2,3}a_{3,4}=0$ and $n\geq 3$, let $\Psi_{10}=\Psi[\alpha,\beta]$ be the submatrix of the verification matrix $\Psi_S(A)$, where
% let us consider a subset of %equations $[A, X]_{i, j}=0$ where $\{i,j\}$ are in 
$$\alpha=\{\{1, 4\}, \{1, 5\}, \{1, 6\}, \{2, 5\}, \{4, 5\}, \{3, 4\}, \{4, 6\},\{3, 5\}, \{4, 7\}, \{3, 6\}\}$$
and %the corresponding set of variables $x_{i,j}$ indexed by 
$$\beta=\{\{1,3\}, \{1,5\}, \{1,6\}, \{1,7\}, \{2,4\}, \{2,5\}, \{2,6\}, \{3,5\}, \{3,6\}, \{3,7\}\}.$$ 
Namely, 
\newcommand\scalemath[2]{\scalebox{#1}{\mbox{\ensuremath{\displaystyle #2}}}}
\[
\Psi_{10} =\left(
\scalemath{0.7}{
\begin{array}{cccccccccc}
 -a_{3,4} & -a_{4,5} & 0 & 0 & \frac{a_{2,3} a_{3,4}}{a_{1,4}} & 0 & 0 & 0 & 0 & 0 \\
 0 & a_{1,1}-a_{5,5} & -a_{5,6} & 0 & 0 & \frac{a_{2,3} a_{3,4}}{a_{1,4}} & 0 & 0 & 0 & 0 \\
 0 & -a_{5,6} & a_{1,1}-a_{6,6} & -a_{6,7} & 0 & 0 & \frac{a_{2,3} a_{3,4}}{a_{1,4}} & 0 & 0 & 0 \\
 0 & \frac{a_{2,3} a_{3,4}}{a_{1,4}} & 0 & 0 & -a_{4,5} & a_{2,2}-a_{5,5} & -a_{5,6} & a_{2,3} & 0 & 0 \\
 0 & a_{1,4} & 0 & 0 & 0 & 0 & 0 & a_{3,4} & 0 & 0 \\
 -a_{1,4} & 0 & 0 & 0 & a_{2,3} & 0 & 0 & -a_{4,5} & 0 & 0 \\
 0 & 0 & a_{1,4} & 0 & 0 & 0 & 0 & 0 & a_{3,4} & 0 \\
 0 & 0 & 0 & 0 & 0 & a_{2,3} & 0 & a_{3,3}-a_{5,5} & -a_{5,6} & 0 \\
 0 & 0 & 0 & a_{1,4} & 0 & 0 & 0 & 0 & 0 & a_{3,4} \\
0&  0 & 0 & 0 & 0 & 0 & a_{2,3} & -a_{5,6} & a_{3,3}-a_{6,6} & -a_{6,7} 
\end{array}}
\right).
\]
If $n=2$ we consider submatrix $\Psi'_{10}$ of $\Psi_{10}$ by deleting last two rows of $\Psi_{10}$ indexed by $\{4,7\}$ and $\{3,6\}$, and by deleting the columns corresponding to variables indexed by $\{1,7\}$ and $\{3,7\}$, namely 
$$\Psi'_{10}=
\tiny{\left(
\begin{array}{cccccccc}
 -a_{3,4} & -a_{4,5} & 0 & \frac{a_{2,3} a_{3,4}}{a_{1,4}} & 0 & 0 & 0 & 0  \\
 0 & a_{1,1}-a_{5,5} & -a_{5,6} & 0 & \frac{a_{2,3} a_{3,4}}{a_{1,4}} & 0 & 0 & 0 \\
 0 & -a_{5,6} & a_{1,1}-a_{6,6}  & 0 & 0 & \frac{a_{2,3} a_{3,4}}{a_{1,4}} & 0 & 0  \\
 0 & \frac{a_{2,3} a_{3,4}}{a_{1,4}} & 0  & -a_{4,5} & a_{2,2}-a_{5,5} & -a_{5,6} & a_{2,3} & 0  \\
 0 & a_{1,4} & 0 & 0 & 0 & 0 & a_{3,4} & 0 \\
 -a_{1,4} & 0 & 0 & a_{2,3} & 0 & 0 & -a_{4,5} & 0  \\
 0 & 0 & a_{1,4} & 0 & 0 & 0 & 0 & a_{3,4} \\
 0 & 0 & 0 & 0 & a_{2,3} & 0 & a_{3,3}-a_{5,5} & -a_{5,6} \\
\end{array}
\right)}.$$
Observe that $\det(\Psi_{10})=-8 a_{1, 4} a_{2, 3}^2 a_{3, 4}^3 a_{4, 5}^2 a_{5, 6} a_{6, 7} \neq 0$ and $\det(\Psi'_{10})=-4 a_{2,3}^2 a_{3,4}^3 a_{4,5}^2 a_{5,6}\neq 0$, so both matrices $\Psi_{10}$ and $\Psi'_{10}$ have full column rank. This implies that $$x_{1, 3}=x_{2,4}=x_{1,5}=x_{2,5}=x_{3,5}=0$$
in each of the three cases. Therefore $X\in \Sc(G_5^c)$, where  $G_5$ is the supergraph of $G$ on $V(G_5)=V(G)$ and
\begin{align*}
E(G_5)&=E(G)+\{\{i,j\}\colon 4\leq i<j\leq n+4\} + \{\{1,3\}, \{2,4\}, \{1,5\}, \{2,5\}, \{3,5\}\}
    \\
    &=\{\{i,j\}\colon 1\leq i<j\leq 5\} \cup \{\{i,j\}\colon 4\leq i<j\leq n+4\}.
\end{align*}
 We will now inductively prove $x_{1,m}=x_{2,m}=x_{3,m}=0$ for all $6 \leq m \leq n+4$.
 
  First we observe that $x_{j,6}=0$ for $j\in [3]$ using Lemma~\ref{lem:rule1} for $i=5$, $j\in[3]$ and $k=6$. 
     Denote $G_6=G_5+\{\{1,6\}, \{2,6\}, \{3,6\}\}$ and observe that $X\in \Sc(G_6^c)$.
    Suppose now we have proved for some $m \in \{6, ..., n+3\}$  that $X \in \Sc(G_{m}^c)$, where $G_{m}=G_{1}+\{\{1,t\}, \{2,t\}, \{3,t\} \colon 4\leq t \leq m\}$. 
    Let $i=m$, $j\in [3]$, $k=m+1$. Observe that again $N_G[i]\cap N_{G_{m}}[j]^c=\{k\}$, since $\{i, k \}$ is an edge in $G$ and $\{j,k \}$ is not an edge in $G_{m}.$ Furthermore $N_G[j]\cap N_{G_{m}}[i]^c=\emptyset$ since $i$ is a neighbour of all the vertices in graph $G_{m}$. Another use of Lemma~\ref{lem:rule1} implies that $ X\in \Sc(G_{m+1}^c)$, where $G_{m+1}=G_{m}+\{\{1,m+1\}, \{2,m+1\}, \{3,m+1\}\}$. 
Thus by induction $X\in\Sc(G_{n+4}^c)=\Sc(K_{n+4}^c)$ and so $X=O$, which implies that $A$ has the SSP.
\end{proof}

\subsection{Examples of graphs of girth four that are not SSP}\label{sec:girth-4-not-SSP}

In this subsection, we provide three examples of unicyclic graphs of girth four not in $\gssp$, for which the converse of Lemma~\ref{lem:unicyclic-not-in-gssp} is not valid. These examples differ in the number of pending paths, i.e.~on the number of degree three vertices on the cycle. This shows that the case when the girth of the graph is four differs from the results in Section~\ref{sec:girth=3}, where we characterized graphs of girth three. 

 \begin{center}
 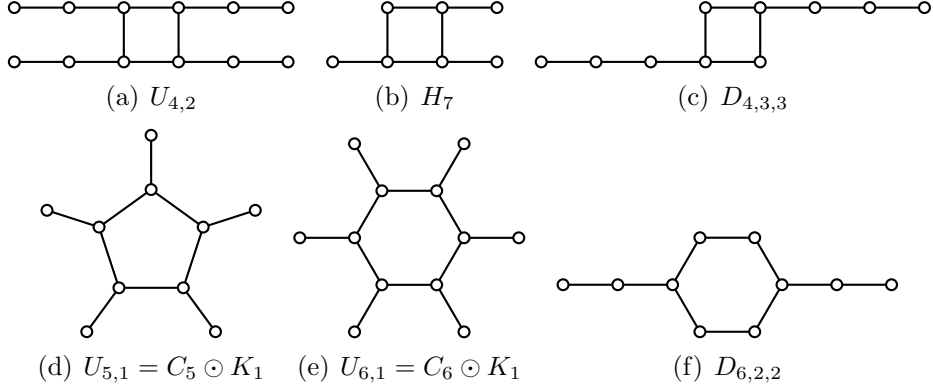
\begin{figure}[htb!]
 \centering
  \subfigure	[$U_{4,2}$]{
    \begin{tikzpicture}[scale=0.9,transform shape]
       \draw (2,1)--(2,0);
     \draw (3,1)--(3,0);
     \foreach \i in {0,1,2,3,4} {
           \draw (\i,0)--(\i+1,0);
           \draw (\i,1)--(\i+1,1);}
    \foreach \i in {0,1,2,3,4,5} {      
            \node[fill=white] at (\i,0) {}; 
            \node[fill=white] at (\i,1) {}; 
        }
    \end{tikzpicture}  \label{g4:U4m}}
     \subfigure	[$H_{7}$]{
    \begin{tikzpicture}[scale=0.9,transform shape]
    \draw (2,1)--(2,0);
     \draw (3,1)--(3,0);
     \foreach \i in {1,2,3} {
           \draw (\i,0)--(\i+1,0);
           \node[fill=white] at (\i,0) {}; 
            }
    \foreach \i in {2,3} {
           \draw (\i,1)--(\i+1,1);
            \node[fill=white] at (\i,1) {}; }
    \node[fill=white] at (4,1) {}; 
    \node[fill=white] at (4,0) {}; 
    \end{tikzpicture}   \label{g4:H7}}
   \subfigure	[$D_{4,3,3}$]{
    \begin{tikzpicture}[scale=0.9,transform shape]
       \draw (2,1)--(2,0);
     \draw (3,1)--(3,0);
     \foreach \i in {-1,0,1,2} {
           \draw (\i,0)--(\i+1,0);
           \node[fill=white] at (\i,0) {}; 
            }
    \foreach \i in {2,3,4,5} {
           \draw (\i,1)--(\i+1,1);
            \node[fill=white] at (\i,1) {}; }
    \node[fill=white] at (6,1) {}; 
    \node[fill=white] at (3,0) {}; 
    \end{tikzpicture}   \label{g4:D433}} 
     \subfigure	[$U_{5,1}=C_5\odot K_1$]{
    \begin{tikzpicture}[scale=0.9,transform shape]
     \foreach \i in {1,2,3,4,5} {
           \draw (\i*360/5+90:1)--(\i*360/5+90+360/5:1);
           \draw (\i*360/5+90:1)--(\i*360/5+90:2);}
        \foreach \i in {1,2,3,4,5} {   
           \node[fill=white] at (\i*360/5+90:1) {}; 
           \node[fill=white] at (\i*360/5+90:2) {}; 
        }
    \end{tikzpicture}  \label{g4:U5m}}
    \subfigure	[$U_{6,1}=C_6\odot K_1$]{
    \begin{tikzpicture}[scale=0.9,transform shape]
     \foreach \i in {0,60,120,...,300} {
           \draw (\i:1)--(\i+60:1);
           \draw (\i:1)--(\i:2);}
        \foreach \i in {0,60,120,...,300} {   
           \node[fill=white] at (\i:1) {}; 
           \node[fill=white] at (\i:2) {}; 
        }
    \end{tikzpicture}  \label{g4:U6m}}
     \subfigure	[$D_{6,2,2}$]{
    \begin{tikzpicture}[scale=0.9,transform shape]
      \draw (0:1)--(0:3);
      \draw (180:1)--(180:3);
      \foreach \i in {0,60,120,...,300} {
           \draw (\i:1)--(\i+60:1);
           }
        \foreach \i in {0,60,120,...,300} {   
           \node[fill=white] at (\i:1) {}; 
        }
        \foreach \i in {2,3} {   
           \node[fill=white] at (0:\i) {}; 
           \node[fill=white] at (180:\i) {}; 
        }
    \end{tikzpicture}  \label{g4:D622}}
    \caption{Five examples of unicyclic graphs of girth four not in $\gssp$, see Examples~\ref{ex:4cycle+4paths}, \ref{ex:4cycle+3paths} and \ref{ex:even-cycle+2paths}.}\label{fig:g4-not-ssp}
\end{figure}
\end{center}

\begin{lemma}\label{lem:adding-pending-paths}
    Let $G$ be a graph, $G\notin\gssp$, and let $H$ be obtained by adding pending paths of the same size from each of the vertices in $G$. Then $H\notin \gssp$. 
\end{lemma}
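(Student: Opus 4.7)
The plan is to convert a witness for $G\notin\gssp$ into a witness for $H\notin\gssp$ via an explicit Kronecker-type block construction. Pick $A\in\calS(G)$ that lacks the SSP together with a nonzero symmetric matrix $X\in\Sc(G^c)$ satisfying $A\circ X=I\circ X=O$ and $[A,X]=O$. Let $n=|V(G)|$, let $k$ be the common size of the added pending paths (so $H$ has $n(k+1)$ vertices), and partition $V(H)$ into levels $0,1,\ldots,k$ of $n$ vertices each: level $0$ is $V(G)$, while level $\ell\in\{1,\ldots,k\}$ collects the $\ell$-th vertex along each pending path. Ordering vertices of $H$ level by level gives every matrix on $V(H)$ a $(k+1)\times(k+1)$ block form with $n\times n$ blocks, with no adjacencies between non-consecutive levels and with every edge between levels $\ell$ and $\ell+1$ of the form $\{(\ell,i),(\ell+1,i)\}$.

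I would then define
\[
B = E_{00}\otimes A + T\otimes I_n,\qquad Y = I_{k+1}\otimes X,
\]
where $E_{00}$ is the $(k+1)\times(k+1)$ matrix whose only nonzero entry is a $1$ in position $(0,0)$, and $T$ is any symmetric tridiagonal $(k+1)\times(k+1)$ matrix with $T_{00}=0$ and nonzero sub- and super-diagonal entries. Concretely, the $(0,0)$-block of $B$ is $A$, each $(\ell,\ell+1)$-block is a nonzero scalar multiple of $I_n$, each $(\ell,\ell)$-block for $\ell\ge 1$ is a scalar multiple of $I_n$, and all remaining blocks vanish; meanwhile $Y$ is block-diagonal with every diagonal block equal to $X$. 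Checking $B\in\calS(H)$ and $Y\in\Sc(H^c)$ with $B\circ Y=I\circ Y=O$ is routine pattern matching, since the block structure of $B$ is exactly the edge structure of $H$, while $X$ already has zero diagonal and zero entries on $E(G)$ and the off-diagonal blocks of $Y$ vanish so all pendant-edge positions are zero.

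The main step, and the whole point of the tensor design, is the commutator identity
\[
[B,Y] = E_{00}\otimes [A,X] + T\otimes [I_n,X] = O,
\]
which follows immediately from bilinearity of the commutator and the Kronecker rule $(P\otimes Q)(R\otimes S)=PR\otimes QS$. Since $Y\ne O$ because $X\ne O$, this exhibits $B\in\calS(H)$ without the SSP, so $H\notin\gssp$. The crucial design choice is taking every block of $B$ outside level $0$ to be a scalar multiple of $I_n$: this is what makes $T\otimes I_n$ commute with $I_{k+1}\otimes X$ despite $X$ not being a scalar matrix, and thereby reduces the verification of $[B,Y]=O$ to the single matrix identity $[A,X]=O$ already supplied by the failure of SSP for $A$.
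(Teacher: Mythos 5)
Your construction is correct and is essentially the paper's own proof: the paper takes $B$ with $(1,1)$-block $A$, blocks $I_n$ between consecutive levels, and $Y=X\oplus\cdots\oplus X=I_{k+1}\otimes X$, which is exactly your $E_{00}\otimes A+T\otimes I_n$ with $T$ the path's adjacency matrix. Your Kronecker phrasing and the slightly more general tridiagonal $T$ change nothing essential, and the key point, that $[B,Y]$ reduces to $E_{00}\otimes[A,X]$ because the non-level-$0$ blocks are scalar multiples of $I_n$, is the same.
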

\begin{proof}
Suppose $G\notin \gssp$ has $n$ vertices and let $A \in S(G)$ and $O \neq X \in \Sc(G^c)$ be matrices such that $[A,X]=O$. Suppose $H$ is obtained from $G$ by adding pending paths of length $m$ from each of the vertices of $G$. Observe that $|H|=(m+1)n$.
Define now $B=[B_{i,j}]_{i,j=1,\ldots,m+1}\in\calS(H)$, where $B_{i,j}\in\bR^{n\times n}$, as
$$B_{i,j}=\begin{cases}
    A,& \text{if} \ i=j=1,\\
    I_n, & \text{if}\ |i-j|=1,\\
    O,& \text{otherwise.}
\end{cases}$$ 
Define $Y=X\oplus\cdots\oplus X\in\Sc(H^c)$ and observe that 
$[B,Y]=0$ if and only if $[A,X]=O$. Now, since $X\neq O$ and $A$ and $X$ commute, it follows that $Y \neq 0$ and also $B$ and $Y$ commute, and hence $H \notin \gssp$.
    \end{proof}

\begin{remark}
 Note that the converse of Lemma~\ref{lem:adding-pending-paths} is not true in general. For example, let $G=K_{1,3}\in \gssp$ and let  $H$ be a corona graph $K_{1,3}\odot K_1$. In this case, $H$ is obtained by adding a pending leaf from each of the four vertices of $K_{1,3}$. Note that $H$ is a generalized star with four arms of lengths $2,2,2,1$ and hence by~\cite[Theorem~4.3]{Lin20SSPgraph} it is not an SSP graph.
\end{remark}
    
\begin{example}\label{ex:4cycle+4paths}
Suppose $n\geq 4$. In [Example 2.1,~\cite{Lin20SSPgraph}] it is proved that adjacency matrix $A={\rm Adj}(C_n) \in S(C_n)$ does not have the SSP since it commutes with matrix $X=(x_{i,j})$ where $x_{i,j}=1$ if and only if $\{i,j\} \in E(C_n^c)$ and $i\ne j$. %$x_{i,j}=0$ elsewhere.
Let us denote by $U_{n,m}$ the graph obtained from the cycle $C_n$ by adding pending paths $P_m$ from all of its vertices, see Figures~\ref{g4:U4m},~\ref{g4:U5m} and~\ref{g4:U6m} for three particular examples of graphs $U_{4,2}$, $U_{5,1}$ and $U_{6,1}$.
If we take $G=C_n$ in Lemma~\ref{lem:adding-pending-paths}, then it follows that $U_{n,m} \notin \gssp$ for all $n\geq 4$ and $m\in \bN_0$.
\end{example}

\begin{example}\label{ex:4cycle+3paths}
Let $H_7$ be the graph presented on Figure~\ref{g4:H7} and let us define 
$$A=\left(
\begin{array}{ccccccc}
 0 & -1 & 0 & 1 & 0 & 0 & 0 \\
 -1 & 0 & -1 & 0 & -\sqrt{2} & 0 & 0 \\
 0 & -1 & 0 & -1 & 0 & 2 & 0 \\
 1 & 0 & -1 & 0 & 0 & 0 & -\sqrt{2} \\
 0 & -\sqrt{2} & 0 & 0 & \sqrt{2} & 0 & 0 \\
 0 & 0 & 2 & 0 & 0 & 0 & 0 \\
 0 & 0 & 0 & -\sqrt{2} & 0 & 0 & -\sqrt{2} \\
\end{array}
\right)\in \calS(H_7) $$
and
$$X=\left(
\begin{array}{ccccccc}
  0 & 0 & \sqrt{2} & 0 & -1 & 0 & -1 \\
 0 & 0 & 0 & 0 & 0 & -\sqrt{2} & -\sqrt{2} \\
 \sqrt{2} & 0 & 0 & 0 & 1 & 0 & -1 \\
 0 & 0 & 0 & 0 & -\sqrt{2} & \sqrt{2} & 0 \\
 -1 & 0 & 1 & -\sqrt{2} & 0 & 0 & 0 \\
 0 & -\sqrt{2} & 0 & \sqrt{2} & 0 & 0 & 0 \\
 -1 & -\sqrt{2} & -1 & 0 & 0 & 0 & 0 \\
\end{array}
\right)\in\Sc(H_7^c)$$
It is a straightforward calculation that $A\circ X=[A,X]=O$,
which shows that matrix $A\in \calS(H_7)$ does not have the SSP. Therefore, ${H_7}$ is not an SSP graph. 
\end{example}

\begin{example}\label{ex:even-cycle+2paths}
For $n\geq 2$ and $m\in \bN_0$, let $D_{2n,m,m}$ be graph obtained by joining an even cycle $C_{2n}$ by two bridges from two diametrically opposite vertices to two leaves of two distinct copies of path $P_m$. Graph $D_{2n,m,m}$ is a unicyclic graph of girth $2n$, and see $D_{4,3,3}$ and $D_{6,2,2}$ on Figures~\ref{g4:D433} and~\ref{g4:D622} for two particular examples.

So defined graph has $|D_{2n,m,m}|=2n+2m$ vertices and let $V(D_{2n,m,m})=[2n+2m]$ and 
\begin{align*}
 E(D_{2n,m,m})=&\{\{1,2n-1\},\{2,2n-1\}\{2n-3,2n\}\}\cup \\
 &\qquad \qquad \{\{i,i+2\}\colon i\in [2n+2m-2]\setminus \{2n-3\}\}.
 \end{align*}

Let $A={\rm A}(D_{2n,m,m})$ be the adjacency matrix of  $D_{2n,m,m}$. Again, $A=[B_{i,j}]_{i,j=1,\ldots,n+m}$ can be seen as a block matrix with $n+m$ blocks $B_{i,j}$ of order $2$, where $$ B_1=\left(
\begin{array}{cc}
 1 & 0 \\
 1 & 0 \\
\end{array}\right), B_2=\left(
\begin{array}{cc}
 0 & 1 \\
 0 & 1 \\
\end{array}\right) \; \text{ and }\; B_{i,j}=\begin{cases}
    B_1,& \text{if } (i,j)=(1,n),\\
    B_2,& \text{if } (i,j)=(n-1,n),\\
    I_2, & \text{if } j=i+1 \text{ and } j\ne n\\
     O,& \text{otherwise, \ }
\end{cases}$$ and $I_2$ is the identity matrix of order $2$.
Let $X=[X_{i,j}]_{i,j=1,\ldots,n+m}\in \Sc(D_{2n,m,m}^c)$ be also a block matrix with $n+m$ blocks $X_{i,j}$ of size $2\times 2$, and where $$X_{i,j}=\begin{cases}
    A(K_2), & \text{for } j=i, j>n-1 \ \text{or} \ i+j=n,\\
     0,& \text{for all the other } i, j,\\
\end{cases}$$
and $A(K_2)$ denotes the adjacency matrix of $K_2$. 
Since $B_1A(K_2)=A(K_2)B_2$, it is straightforward to check that $[A,X]=O$, and so the adjacency matrix $A$ does not have the SSP. Therefore, $D_{2n,m,m}\notin\gssp$.
\end{example}

\section{Unicyclic graphs of girth five}\label{sec:girth=5}

The smallest graph of girth five is cycle $C_5$, known not to be an SSP graph, see~\cite[Example~2.1]{Lin20SSPgraph}. Moreover, its corona graph $U_{5,1}$ is not an SSP graph as well; see Example~\ref{ex:4cycle+4paths}. We prove in this chapter that all tadpole graphs $T_{5,n}$ are SSP graphs.

First, let us relate the verification matrices of a graph and its subgraph. Let us consider a connected graph $H=(V(H),E(H))$ on vertices $V(H)=[n+1]$ and its induced subgraph $G=H[\{1,\ldots,n\}]%=H[[n]]
$. %We would like to relate the SSPness of graphs $G$ and $H$.
Let \begin{equation*}%\label{eq:bordered-B-Y-matrices}
B=\begin{pmatrix}
    A&\bfb\\
    \bfb\trans & \beta
\end{pmatrix}\in \calS(H) \text{ and } Y=\begin{pmatrix}
    X&\bfy\\
    \bfy\trans & 0
\end{pmatrix}\in \Sc(H^c)
\end{equation*}
where $A=B[[n]]\in \calS(G)$, %$X\circ I_n=O$, 
$\beta \in \bR$, and $\bfb%=(b_i)
, \bfy
%=(y_i)
\in \bR^n$ such that $\bfb \circ \bfy ={\bf 0}$.
The condition $[B,Y]=O$ is equivalent to the system of linear equations
\begin{align}
    [A,X]+\bfb \bfy\trans-\bfy\bfb\trans&=O \label{eq3}\\ 
    X\bfb+(\beta I_n-A)\bfy&={\bf 0}.\nonumber
\end{align}
We can rewrite this system to
\begin{align}
    AXI_n-I_nXA+\bfb \bfy\trans I_n-I_n\bfy\bfb\trans&=O\nonumber\\
    I_nX\bfb+(\beta I_n-A)\bfy &={\bf 0},\nonumber
\end{align}
and after vectorization, we obtain
\begin{align*}
    (I_n\otimes A-A\otimes I_n)\vecop(X)+(I_n\otimes\bfb- \bfb\otimes I_n)\bfy&={\bf 0}\\
    (\bfb\trans \otimes I_n)\vecop(X)+(\beta I_n-A)\bfy&={\bf 0},
\end{align*}
which can be rewritten as 
\begin{equation}\label{eq:mtx-form}
    \begin{pmatrix}
        I_n\otimes A-A\otimes I_n& I_n\otimes\bfb- \bfb\otimes I_n\\
    \bfb\trans \otimes I_n &\beta I_n-A
    \end{pmatrix}\begin{pmatrix}
        \vecop(X)\\
        \bfy
    \end{pmatrix}={\bf 0}.
\end{equation}
Note that $\vecop(X)$ has $n^2$ components, and since $X$ is symmetric, it contains in addition to $|E(G)|+n$ zeros also each variable $x_{ij}$ twice. Also, vector $\bfy$ may contain some zeros. After eliminating possible zeros in $\vecop(X)$ and $\bfy$ and merging the same variables, our system~\eqref{eq:mtx-form} becomes
\begin{align}\label{eq:lin-system-Psi}
    \begin{pmatrix}
        A_1 & B_1\\
        C_1 &D_1
    \end{pmatrix}
    \begin{pmatrix}
        \bfx_G\\
        \bfx'
    \end{pmatrix}={\bf 0},
\end{align}
where $\bfx_G=(x_{ij})$ for $i\neq j$ and $\{i,j\}\in E(G^c)$, the variables in $\bfx'$ are ordered in ascending lexicographic order, and $\bfx'=(x_{i,n+1})=\supp(\bfy)$, where $
\{i,n+1\}\notin E(H)$. Note that for some graphs, it will happen that some variables of $\vecop(X)$ or $\bfy$ will be immediately forced to be zeros by some of the equations in $[A,X]=O$, i.e. in the case of pending paths, see Lemma~\ref{lem:corofcor}. In this case, we put them as zeros first, so they will not appear in $\bfx_G$ or $\bfx'$. Moreover, note that the number of columns of $B_1$ equals the size of $\bfx'$.

Now observe that  $A_1=\Psi_{S}(A)$ is the SSP verification matrix of $A$ and  \begin{equation}\label{eq:PsiB}\Psi_B=\begin{pmatrix}
        A_1 & B_1\\
        B_2 &A_2
    \end{pmatrix}\end{equation} is the permuted SSP verification matrix of $B$. So we have the following.

\begin{lemma}\label{lem:verification-matrix-subgraph}
    If a graph $H$ is obtained from a graph $G$ as  $V(H)=V(G)\cup \{w\}$ and $E(G)\subseteq E(H)$, then the SSP verification matrix for any  $A\in \calS(G)$ is the submatrix of the SSP verification matrix for any $B\in \calS(H)$.

    Moreover, if $w$ is a leaf in $H$, then the SSP verification matrix of $B$ is permutationally similar to matrix 
    $$\Psi_B=\begin{pmatrix}
         \Psi_S(A) & B_1\\
         B_2 & A_2
     \end{pmatrix},$$ where  $\Psi_S(A)$ is the SSP verification matrix of $A$ and 
     $ B_1=- a_{n,n+1}\begin{pmatrix} O\\   I\end{pmatrix}$.
\end{lemma}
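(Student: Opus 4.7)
The plan is to read off both statements directly from the block decomposition and vectorisation carried out in the paragraph preceding the lemma, culminating in the system~\eqref{eq:lin-system-Psi} and the form~\eqref{eq:PsiB}. The key observation is already essentially recorded in the text: the top-left block $A_1$ of the coefficient matrix in~\eqref{eq:lin-system-Psi} is literally $\Psi_S(A)$, because its rows (indexed by unordered pairs $\{i,j\}\subseteq[n]$) and its columns (indexed by the entries of $\bfx_G$, i.e.\ by $\{k,\ell\}\in E(G^c)$) encode exactly the equations $[A,X]_{i,j}=0$ and exactly the variables $x_{k,\ell}$ that define $\Psi_S(A)$.

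For the first statement I would take an arbitrary $B\in\calS(H)$ and split it into the block form from the lemma's setup with $A=B[[n]]$. Running through the vectorisation that produces~\eqref{eq:lin-system-Psi}--\eqref{eq:PsiB}, the coefficient matrix $\Psi_B$ is permutationally similar to $\Psi_S(B)$, and its top-left block is $A_1=\Psi_S(A)$. Under that permutation $\Psi_S(A)$ appears as a submatrix of $\Psi_S(B)$, proving the first claim.

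For the second statement I would specialise to the case where $w=n+1$ is a leaf of $H$. Relabelling so that $n$ is its unique neighbour, one has $b_n=a_{n,n+1}\ne 0$ and $b_i=0$ for $i<n$. The Hadamard constraint $\bfb\circ\bfy={\bf 0}$ then forces $y_n=0$, so $\bfx'=(y_1,\ldots,y_{n-1})^\top$. To identify $B_1$ I would read off the coupling term from~\eqref{eq3}: for $i<j\le n$ the $(i,j)$-entry of $\bfb\bfy^\top-\bfy\bfb^\top$ equals $b_iy_j-b_jy_i$, which vanishes whenever $j<n$ and equals $-a_{n,n+1}y_i$ when $j=n$. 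Choosing the row ordering so that the pairs $\{1,n\},\{2,n\},\ldots,\{n-1,n\}$ come last (in that order) and the columns of $\bfx'$ as $y_1,\ldots,y_{n-1}$ produces exactly $B_1=-a_{n,n+1}\bigl(\begin{smallmatrix}O\\ I_{n-1}\end{smallmatrix}\bigr)$, as asserted.

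The main obstacle is not mathematical but bookkeeping: one must check that the row-and-column permutation used to bring $B_1$ into the claimed block form is consistent with the permutation that turns $A_1$ into $\Psi_S(A)$, and that the composite permutation is of the form $P\Psi_S(B)Q$ permitted by the paper's convention on verification matrices. Once the indexing is tracked carefully, no further argument is required.
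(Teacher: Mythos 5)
Your proposal is correct and follows essentially the same route as the paper: both read the first claim directly off the block decomposition in~\eqref{eq:PsiB}, and both identify $B_1$ in the leaf case by observing that the coupling term $\bfb\bfy\trans-\bfy\bfb\trans$ in~\eqref{eq3} contributes only to the equations indexed by $\{i,n\}$, $i\in[n-1]$, with coefficient $-a_{n,n+1}$ on $x_{i,n+1}$. The only cosmetic difference is that the paper phrases the nonzero rows of $B_1$ via the index set $\alpha=\{\{i,n\}: i\in[n-1],\ x_{i,n+1}\neq 0\}$, which also accommodates variables already forced to zero, but this does not change the argument.
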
    
\begin{proof}
   The first part of the lemma follows by equation~\eqref{eq:PsiB}. Now, without loss of generality, let $w=n+1$ be a leaf attached to vertex $v=n$ in graph $G$ with $V(G)=[n]$.  Using notation above,~\eqref{eq3} now tells us that system $[A,X]=O$ differs from system $[A,X]=\bfy\bfb\trans-\bfb\bfy\trans$ only in equations that belong to positions 
   $$\alpha=\{\{i,n\}: i\in[n-1], x_{i,n+1}\neq 0\}.$$
   Since $$\bfb=(0,0,...,0, a_{n,n+1})\trans \text{ and } \bfy=(x_{1,n+1}, x_{2,n+1},...,x_{n-1,n+1}, 0)\trans,$$
   observe that $B_1[\alpha^c,\colon]=O$ and  $B_1[\alpha,\colon]=-a_{n,n+1}I$.
\end{proof}

In the proof of the following theorem, we will also investigate the structure of submatrix $B_1$ in~\eqref{eq:PsiB} in the case when a leaf $w=n+1$ in $H$ is attached to a leaf $v=n\in V(G)$. %We prove that the tadpole graphs of girth $5$ are also the SSP graphs.

\begin{theorem}\label{thm:girth-5}
Tadpole graph $T_{5,n}$ is in $\gssp$ for all $n\geq 1$.

\end{theorem}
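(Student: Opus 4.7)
The plan is to argue by induction on $n$. The base case $n=1$ is established in Example~\ref{ex:t51ssp}, which exhibits an invertible $9 \times 9$ submatrix $\Psi'_\alpha$ of $\Psi_S(A')$ for any $A'\in\calS(T_{5,1})$. For $n \geq 2$, let $A\in\calS(T_{5,n})$ and $X \in \Sc(T_{5,n}^c)$ satisfy $A\circ X = I\circ X = O$ and $[A,X]=O$; the goal is to deduce $X=O$. I would begin by applying Lemma~\ref{lem:corofcor} to the induced path on vertices $5,6,\ldots,n+5$, taking $v_m = 5$ (the unique vertex of this path with neighbours outside it, namely $1$ and $4$ on the cycle). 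This forces $x_{i,j}=0$ for all $5\leq i < j \leq n+5$, so the only potentially nonzero entries of $X$ lie in rows or columns indexed by $\{1,2,3,4\}$, giving $4n+5$ surviving unknowns.

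Next, I would construct a square $(4n+5)\times(4n+5)$ submatrix $M$ of $\Psi_S(A)$ and show it has nonzero determinant. Order the columns in layers: the nine $T_{5,1}^c$--variables $x_{1,3},x_{1,4},x_{1,6},x_{2,4},x_{2,5},x_{2,6},x_{3,5},x_{3,6},x_{4,6}$ first, then the four variables $x_{1,k},x_{2,k},x_{3,k},x_{4,k}$ for each successive layer $k=7,\ldots,n+5$. For the first block-row take the nine rows of $\alpha$ from Example~\ref{ex:t51ssp}; for each subsequent layer $k$ take rows indexed by positions $\{i,k-1\}$ for $i\in\{1,2,3,4\}$ (with $\{3,6\}$ replacing any duplicate). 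A direct computation of $[A,X]_{i,k-1}$ shows that the contribution to the layer-$k$ column is exactly $-a_{k-1,k}\,x_{i,k}$, so each later layer diagonal block is a nonzero multiple of $I_4$; moreover, every ``clean'' row (both indices in $\{1,\ldots,5\}$, or equal to $\{5,6\}$) in the $\alpha$ block has all entries in the layer-$k$ columns, $k\geq 7$, equal to zero. Only the three rows $\{1,6\},\{2,6\},\{4,6\}$ introduce off-diagonal coupling $-a_{6,7}$ from the core block into the layer-$7$ columns, yielding a block lower-triangular structure whose determinant factors as $\det(\Psi'_\alpha)\cdot\prod_{k=7}^{n+5}(\pm a_{k-1,k})^4$, i.e., a nonzero product by Example~\ref{ex:t51ssp} and the edges of $T_{5,n}$.

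Invertibility of $M$ then forces all remaining unknowns to vanish, so $X=O$ and $A$ has the SSP. If the block-triangularity argument above turns out to require a side condition on the entries of $A$, I would fall back to the two-case strategy of Theorem~\ref{t4nssp}: handle the generic case via the natural $M$, and in the degenerate case replace the core block by an enlarged submatrix analogous to $\Psi_{10}$, exploiting the vanishing relation among edge weights to produce a different nonzero determinant. An alternative finishing move, also in the style of Theorem~\ref{t4nssp}, is to force only the core and layer-$7$ variables via $M$ and then propagate the remaining zeros $x_{i,k}=0$ with $i\in\{1,2,3,4\}$, $k\geq 8$, by iterated application of Lemma~\ref{lem:rule1}, concluding $X\in \Sc(K_{n+5}^c)$ and hence $X=O$.

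The main obstacle I anticipate is the careful verification that the layer-structure of $\Psi_S(A)$ is truly block lower-triangular in the chosen ordering, together with the invertibility of the core $9\times 9$ block in the presence of the new pending-path vertices; the extra terms $-a_{6,7}x_{i,7}$ in the rows $\{1,6\},\{2,6\},\{4,6\}$ must land strictly to the right of the core block under the column ordering, and any breakdown of this property would force a case analysis in the spirit of the $a_{1,2}a_{1,4}-a_{2,3}a_{3,4}=0$ branch of Theorem~\ref{t4nssp}.
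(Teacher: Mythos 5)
Your opening move (Lemma~\ref{lem:corofcor} applied to the pendant path, killing $x_{i,j}$ for $5\leq i<j\leq n+5$ and leaving $4n+5$ unknowns) and your closing move (propagating $x_{i,k}=0$ down the path once the variables near the cycle are dead) both match the paper. The gap is in the central step. First, your row selection collides: for layer $k=7$ you want rows $\{i,6\}$, $i\in\{1,2,3,4\}$, but three of these ($\{1,6\},\{2,6\},\{4,6\}$) are already spent in the core block $\alpha$, and ``$\{3,6\}$ replacing any duplicate'' supplies only one of the three missing rows. Second, the claimed factorization $\det M=\det(\Psi'_{\alpha})\cdot\prod_{k=7}^{n+5}(\pm a_{k-1,k})^4$ does not follow: in your layered ordering the equation at position $\{i,k-1\}$ contains $-a_{k-2,k-1}x_{i,k-2}$ as well as $-a_{k-1,k}x_{i,k}$, so $M$ is block \emph{tridiagonal}, not triangular (you yourself note the $-a_{6,7}$ coupling from the core rows into the layer-$7$ columns, which already destroys lower-triangularity), and the determinant of a block tridiagonal matrix is not the product of its diagonal blocks.

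More importantly, no choice of $4n+5$ rows can give a determinant that is a nonzero monomial in the off-diagonal entries, so the difficulty is not merely bookkeeping. The paper's proof shows that after the reduction there is a genuinely degenerate branch: assuming $x_{1,3}\neq 0$, the equations supported on the first seven vertices only force the four diagonal entries $a_{2,2},a_{3,3},a_{4,4},a_{5,5}$ to satisfy the relation~\eqref{eq:four-diagonal-entries-of-A}; for such $A$ the subsystem on your ``core plus layer-$7$'' variables has a nonzero solution, so every square submatrix supported there is singular. The contradiction is only reached by descending two more vertices along the path (using $[A,X]_{1,7}$ and $[A,X]_{4,7}$ to solve for $x_{1,8},x_{4,8}$ and then evaluating $[A,X]_{5,8}$), and even the preliminary conclusion $x_{1,6}=x_{4,6}=0$ requires a separate argument that certain minors $M_{6,6},M_{7,6},M_{8,6}$ of $\Psi'_{\alpha}$ vanish. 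Your fallback sentence correctly anticipates that a two-case analysis in the spirit of Theorem~\ref{t4nssp} may be needed, but there the degeneracy was a single polynomial condition on edge weights fixed by one alternative submatrix; here the degenerate locus is cut out by conditions on the \emph{diagonal} entries and its resolution is the substance of the proof, which your proposal does not supply.
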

\begin{proof}
Observe first that the case $n=1$ is covered in Example~\ref{ex:t51ssp}, where we proved that $T_{5,1}\in \gssp$.

Assume now $n\geq 2$ and let us label the vertices of $T_{5,n}$ as $V(T_{5,n})=[n+5]$ and 
$$E(T_{5,n})=\{\{1,5\}\} \cup \{\{i,i+1\}\colon i\in [n+4]\}.$$ Let us choose arbitrary $A=(a_{i,j})\in \calS(T_{5,n})$ and $X\in \Sc(T_{5,n}^c)$.
Note first that $x_{i,j}=0$, $5\leq i\leq j \leq n+5$, because of Lemma~\ref{lem:corofcor}.
First, we claim that $x_{1,6}=x_{4,6}=0$.

Using Lemma~\ref{lem:verification-matrix-subgraph} for $n-1$ times, we see that the permuted verification matrix $\Psi_A$ of $A$ is of the form $$\Psi_A=\begin{pmatrix}
    \Psi' & B_1\\
    B_2 & A_2
\end{pmatrix},$$
where $\Psi'$ is a permuted verification matrix of $A'\in \calS(T_{5,1})$, see Example~\ref{ex:t51ssp}, and $ B_1=- a_{6,7}\begin{pmatrix} O & O\\   I & O\end{pmatrix}$. Note that
$\Psi'_\alpha$ in~\eqref{eq:invertible-submatrix-of-verification-matrix-T51}
is an invertible submatrix of $\Psi'$ 
%the verification SSP matrix % $\Psi_S(A)$ 
%of an arbitrary matrix 
for every $A'\in \calS(T_{5,1})$, and $\Psi'_{\alpha}$ 
contains all the columns of $\Psi'$. Let us abbreviate $\Psi_\alpha:=\Psi'_{\alpha}$. Let $\beta=\{\{1,7\},\{2,7\},\{3,7\},\{4,7\}\}$. Now the matrix
%system~\eqref{eq:lin-system-Psi} can be rewritten as 
$\Psi_A$ can be further decomposed as
    \begin{align}\label{eq:lin-system-Psi-invertible-part}
    \Psi_A= \begin{pmatrix}
        \Psi_{\alpha} & B_{\alpha}\\
        \Psi_N & B_N\\
        B_2 & A_2
    \end{pmatrix},
\end{align}
where the last four columns of matrices $\Psi_{\alpha}$, $\Psi_N$ and $B_2$ correspond to variables $x_{1,6}, x_{2,6}, x_{3,6}$ and $x_{4,6}$. Moreover,  $B_{\alpha}=\begin{pmatrix} B_{\alpha,\beta} & O
\end{pmatrix}$, where
$$B_{\alpha,\beta}=B_{\alpha}[\colon,\beta ]=\begin{pmatrix}
 0 & 0 & 0 & 0 \\
 0 & 0 & 0 & 0 \\
 0 & 0 & 0 & 0 \\
 0 & 0 & 0 & 0 \\
 0 & 0 & 0 & 0 \\
 -a_{6,7} & 0 & 0 & 0 \\
 0 & -a_{6,7} & 0 & 0 \\
 0 & 0 & 0 & -a_{6,7} \\
 0 & 0 & 0 & 0 \end{pmatrix},$$ and the four columns of $B_{\alpha,\beta}$ correspond to variables $x_{1,7}, x_{2,7}, x_{3,7}$ and $x_{4,7}$.

 Let us show that $x_{1,6}=0$. If we set \begin{align*}%\label{eq:variables}
\bfx_6&=(x_{1,3}, x_{1,4}, x_{2,4}, x_{3,4}, x_{3,5}, x_{1,6}, x_{2,6}, x_{3,6}, x_{4,6})\trans \, \text{ and }\\
\, \bfx'&=(x_{1,7}, x_{2,7}, x_{3,7}, x_{4,7},\bfx_8\trans)\trans,\end{align*}
where all other variables $\{x_{i,j}\colon 1\leq i\leq 4, 8\leq j\leq n+5\}$ are collected in lexicographically ascending order in $\bfx_8$.
From decomposition~\eqref{eq:lin-system-Psi-invertible-part} and the system
$    \Psi
    \begin{pmatrix}
        \bfx_6\\
        \bfx'
    \end{pmatrix}={\bf 0}$, it follows that $\Psi_{\alpha} \bfx_6+B_{\alpha} \bfx'=\bf 0$, and the invertibility of $\Psi_{\alpha}$ implies 
$\bfx_6=-\Psi_{\alpha}^{-1} B_{\alpha} \bfx'$.    
To prove that $x_{1,6}=0$, we have to prove that the sixth entry of $\bfx_6$ is equal to $0$, or equivalently  that the sixth row of $\Psi_{\alpha}^{-1}B_{\alpha}$ or $\Psi_{\alpha}^{-1}B_{\alpha,\beta}$ is ${\bf 0}$. %, so that the sixth row of $\Psi_{\alpha}^{-1}B_{\alpha,\beta}$ is $\begin{pmatrix}
 %   0&0&0&0
%\end{pmatrix}$. 
Note that the latter is equal to 
$$(\Psi_{\alpha}^{-1}B_{\alpha,\beta})[\{6\},:]=\begin{pmatrix}
    \mu_{6,6} M_{6,6} & \mu_{7,6} M_{7,6} & 0 & \mu_{8,6} M_{8,6}
\end{pmatrix}$$
for some suitable $\mu_{i,j}\in \bR$, and where $M_{i,j}$ denotes the minor of $\Psi_{\alpha}$ obtained by deleting the $i$th row and the $j$th column of $\Psi_{\alpha}$.
Hence it is enough to show that $M_{6,6}=M_{7,6}=M_{8,6}=0$.
The first of them is equal to
\begin{align*}
M_{6,6}&=\det\begin{pmatrix}
    a_{1,2} & 0 & -a_{3,4} & 0 & 0 & 0 & 0 & 0 \\
 0 & -a_{4,5} & 0 & a_{1,2} & 0  & 0 & 0 & 0 \\
 -a_{2,3} & 0 & 0 & a_{1,5} & 0 & 0 & 0 & 0 \\
 0 & 0 & a_{2,3} & 0 & -a_{4,5} & 0 & 0 & 0 \\
 0 & -a_{1,5} & 0 & 0 & a_{3,4} & 0 & 0 & -a_{5,6} \\
 0 & 0 & 0 & -a_{5,6} & 0  & a_{2,2}-a_{6,6} & a_{2,3} & 0 \\
 0 & 0 & 0 & 0 & 0 & 0 & a_{3,4} & a_{4,4}-a_{6,6} \\
 0 & 0 & 0 & 0 & 0  & 0 & 0 & a_{4,5}
\end{pmatrix}=\\
&=(a_{2,2}-a_{6,6})a_{3,4}a_{4,5} \det\left(
\begin{array}{ccccc}
 a_{1,2} & 0 & -a_{3,4} & 0 & 0  \\
 0 & -a_{4,5} & 0 & a_{1,2} & 0  \\
 -a_{2,3} & 0 & 0 & a_{1,5} & 0   \\
 0 & 0 & a_{2,3} & 0 & -a_{4,5} \\
 0 & -a_{1,5} & 0 & 0 & a_{3,4}  
 \end{array}
\right)=0.
\end{align*}
Similar arguments can be used to show that $M_{7,6}=M_{8,6}=0$, which implies $x_{1,6}=0$.
From the equation $[A,X]_{5,6}=0$ observe that $x_{4,6}=0$ as well. 

Now let us substitute $x_{1,6}=x_{4,6}=x_{i,j}=0$, $5\leq i \leq j \leq n+5$ and let vector $\bfx$ of the remaining variables, which are corresponding to $E(G^c)\setminus \{\{i,j\}\colon 5\leq i \leq j \leq n+5\}\setminus \{x_{1,6},x_{4,6}\}$, be decomposed as 
$$\bfx=\begin{pmatrix}
    x_{1,3}\\
    \bfx_1\\
    \bfx_8
\end{pmatrix},$$
where
$$
\bfx_1:=(x_{2,5},x_{2,4},x_{3,5},x_{1,4},x_{2,6},x_{3,6},x_{1,7},x_{2,7},x_{3,7},x_{4,7})\trans$$
and let all other variables $\{x_{i,j}\colon 1\leq i\leq 4, 8\leq j\leq n+5\}$ be collected in lexicographically ascending order in $\bfx_8$.

Moreover, we abuse the notation and denote the new verification matrix of $[A,X]=O$ again by $\Psi$, where the columns are ordered as in $\bfx$. 
Let us decompose the indices of rows  of $\Psi$ to
\begin{align*}
\alpha_1&:=\{\{1,2\},\{2,3\},\{3,4\},\{4,5\},\{2,5\},\{3,5\},\{1,6\},\{2,6\},\{3,6\},\{4,6\}\},\\
\alpha_2&:=\{\{1,3\},\{1,4\},\{2,4\},\{5,7\}\}  \text{ and } \\
\alpha_3&:=(\alpha_1\cup \alpha_2)^c.
\end{align*}
If $\beta_1$ is the subset of columns corresponding to variables in $\bfx_1$, then observe that in the chosen order of variables and equations the matrix  $\Psi_0:=\Psi[\alpha_1, \beta_1]$ is equal to 
\newcommand\scalemath[2]{\scalebox{#1}{\mbox{\ensuremath{\displaystyle #2}}}}
$$\Psi_0=\left(
\scalemath{0.8}{
\begin{array}{cccccccccccc}
a_{1,5} & 0 & 0 & 0 & 0 & 0 & 0 & 0 & 0 & 0 \\
 0 & -a_{3,4} & 0 & 0 & 0 & 0 & 0 & 0 & 0 & 0 \\
 0 & a_{2,3} & -a_{4,5} & 0 & 0 & 0 & 0 & 0 & 0 & 0 \\
 0 & 0 & a_{3,4} & -a_{1,5} & 0 & 0 & 0 & 0 & 0 & 0 \\
 a_{2,2}-a_{5,5} & -a_{4,5} & a_{2,3} & 0 & -a_{5,6} & 0 & 0 & 0 & 0
   & 0 \\
 a_{2,3} & 0 & a_{3,3}-a_{5,5} & 0 & 0 & -a_{5,6} & 0 & 0 & 0 & 0 \\
 0 & 0 & 0 & 0 & a_{1,2} & 0 & -a_{6,7} & 0 & 0 & 0 \\
 -a_{5,6} & 0 & 0 & 0 & a_{2,2}-a_{6,6} & a_{2,3} & 0 & -a_{6,7} & 0
   & 0 \\
 0 & 0 & -a_{5,6} & 0 & a_{2,3} & a_{3,3}-a_{6,6} & 0 & 0 & -a_{6,7}
   & 0 \\
 0 & 0 & 0 & 0 & 0 & a_{3,4} & 0 & 0 & 0 & -a_{6,7} \\
\end{array}}
\right)$$ %has $\det(\Psi_0)=-a_{1, 5}^2 a_{3, 4} a_{4, 5} a_{5, 6}^2 a_{6, 7}^4$, and hence
and is invertible. Since  $\Psi[\alpha_1, \overline{\beta_1}\setminus\{x_{1,3}\}]=O$, the equations of $\Psi \bfx ={\bf 0}$ on positions $\alpha_1$ imply $x_{1,3} \bfv+\Psi_0 \bfx_1 + O \bfx_8=\bf 0$, where $\bfv=\Psi[\alpha_1,\{1,3\}]$. Therefore, 
\begin{equation}\label{eq:x_1-by-v}
    \bfx_1=-x_{1,3} \Psi_0^{-1} \bfv,
\end{equation} and hence we expressed all variables in $\bfx_1$ by $x_{1,3}$ and entries of $A$. 

Suppose that $x_{1,3}\ne 0$ and observe that $\Psi[\alpha_2,  \overline{\beta_1}\setminus\{x_{1,3}\}]=O$ and the four equations in $\frac{1}{x_{1,3}} [A,X]|_{\alpha_2}=O$  are linear in variables $\{a_{3,3}, a_{4,4},a_{2,2}, a_{5,5}\}$. The corresponding $4\times 4$ matrix of this system is equal to $$S=\begin{pmatrix}
 1 & 0 & 0 & 0 \\
 0 & \frac{a_{1,2} a_{2,3}}{a_{1,5} a_{4,5}} & 0 & 0 \\
 0 & \frac{a_{1,2}}{a_{3,4}} & -\frac{a_{1,2}}{a_{3,4}} & 0 \\
 -\frac{a_{1,2} a_{2,3}}{a_{5,6} a_{6,7}} & 0 & -\frac{a_{1,2} a_{2,3}}{a_{5,6} a_{6,7}} & \frac{2 a_{1,2} a_{2,3}}{a_{5,6} a_{6,7}} 
\end{pmatrix},$$
and so invertible. Hence 
\begin{equation}\label{eq:four-diagonal-entries-of-A}
    \begin{pmatrix}
    a_{3,3}\\ a_{4,4}\\a_{2,2}\\a_{5,5}
\end{pmatrix}=S^{-1} \begin{pmatrix}
    -a_{1,1}-\frac{a_{1,2} a_{2,3} \left(a_{1,5}^2-a_{3,4}^2\right)}{a_{1,5} a_{3,4} a_{4,5}} \\
 -\frac{a_{1,2}^2}{a_{3,4}}-\frac{a_{1,1} a_{2,3} a_{1,2}}{a_{1,5} a_{4,5}}+a_{3,4} \\
 \frac{a_{2,3} \left(a_{4,5}^2-a_{1,2}^2\right)}{a_{1,5} a_{4,5}} \\
 \frac{a_{1,2}^2 \left(a_{4,5}^2-a_{2,3}^2\right) a_{1,5}^2+\left(a_{1,5}^2-a_{2,3}^2\right) a_{3,4}^2 a_{4,5}^2}{a_{1,5} a_{3,4} a_{4,5} a_{5,6} a_{6,7}}
\end{pmatrix},
\end{equation}
so we expressed four diagonal entries $a_{3,3}, a_{4,4}, a_{2,2}$ and $a_{5,5}$ of $A$ by other entries of $A$. 
In the case $n=2$, observe now that $$[A,X]_{1,7}=-\frac{a_{1,2}a_{2,3}a_{5,6}x_{1,3}}{a_{6,7}a_{1,5}} \ne 0,$$
a contradiction with $[A,X]=O$.
If $n\geq 3$, then after the substitution~\eqref{eq:four-diagonal-entries-of-A}, we obtain 
$$[A,X]_{1,7}=-\frac{a_{1,2}a_{2,3}a_{5,6}x_{1,3}}{a_{6,7}a_{1,5}}-a_{7,8}x_{1,8} \text{ and } [A,X]_{4,7}=-\frac{a_{1,2}a_{2,3}a_{5,6}x_{1,3}}{a_{6,7}a_{4,5}}-a_{7,8}x_{4,8}.$$ Since both entries have to be equal to zero, we have $$x_{1,8}=-\frac{a_{1,2}a_{2,3}a_{5,6}x_{1,3}}{a_{6,7}a_{1,5}a_{7,8}} \ \text{and}\ x_{4,8}=-\frac{a_{1,2}a_{2,3}a_{5,6}x_{1,3}}{a_{6,7}a_{4,5}a_{7,8}}.$$ After substitution we obtain $$[A,X]_{5,8}=-2\frac{a_{1,2}a_{2,3}a_{5,6}x_{1,3}}{a_{6,7}a_{7,8}}\ne 0,$$
which again contradicts $[A,X]=O$.
Therefore $x_{1,3}=0$, and so $\bfx_1={\bf 0}$ by~\eqref{eq:x_1-by-v}. Now it follows from equations in positions $\{1,k\},\{2,k\}, \{3,k\}, \{4,k\}$, $7\leq k\leq n+4$, that  $\bfx_8={\bf 0}$, so $\bfx={\bf 0}$ and therefore $X=O$. Hence, any $A\in \calS(T_{5,n})$ has the SSP and so $T_{5,n}\in \gssp$.
\end{proof}

\section{Further examples and questions}\label{sec:girth=6}

In the last three sections, we showed that all tadpole graphs of girth at most five are in $\gssp$, but we could not find any other unicyclic graphs of girth four or five that would be in $\gssp$ as well. 
Even more, if more than one vertex on the 4-cycle has degree three, we have shown examples of graphs not in the $\gssp$. We also provided examples of unicyclic graphs of larger girths not in $\gssp$. Therefore, the following question arises naturally.

\begin{question}
    Are there any girth four or five unicyclic graphs except $T_{4,n}$ or $T_{5,n}$ in $\gssp$?
\end{question}

We saw in Examples~\ref{ex:4cycle+4paths} and~\ref{ex:even-cycle+2paths} that $U_{n,m},D_{2n,m,m}\notin\gssp$, in particular, $U_{6,m},D_{6,m,m}\notin \gssp$. In the next example, we also show that $T_{6,1}\notin \gssp$. This proves that not all tadpole graphs are the SSP graphs, hence Theorems~\ref{t4nssp} and~\ref{thm:girth-5} cannot be extended to unicyclic graphs of girth more than five.

\begin{example}
Let us define 
$$A=\left(
\begin{array}{ccccccc}
 1 & -1 & 0 & 0 & 0 & 1 & 0 \\
 -1 & 2 & 1 & 0 & 0 & 0 & 0 \\
 0 & 1 & 0 & 1 & 0 & 0 & 0 \\
 0 & 0 & 1 & 2 & -1 & 0 & 0 \\
 0 & 0 & 0 & -1 & 1 & 1 & 0 \\
 1 & 0 & 0 & 0 & 1 & 1 & 1 \\
 0 & 0 & 0 & 0 & 0 & 1 & 1 \\
\end{array}
\right)\in \calS(T_{6,1}).$$
It is a straightforward calculation that $A$ commutes with 
$$X=\left(
\begin{array}{ccccccc}
 0 & 0 & 1 & 0 & -1 & 0 & 0 \\
 0 & 0 & 0 & -1 & 0 & 1 & 0 \\
 1 & 0 & 0 & 0 & 1 & -1 & 1 \\
 0 & -1 & 0 & 0 & 0 & 1 & 0 \\
 -1 & 0 & 1 & 0 & 0 & 0 & 0 \\
 0 & 1 & -1 & 1 & 0 & 0 & 0 \\
 0 & 0 & 1 & 0 & 0 & 0 & 0 \\
\end{array}
\right),$$
which shows that matrix $A\in \calS(T_{6,1})$ does not have the SSP. Therefore, $T_{6,1}$ is not an SSP graph. 
\end{example}

\begin{question}
    To the best of our knowledge, there is no known unicyclic graph of girth at least $6$ in $\gssp$. Are there any unicyclic graphs of girth $g\geq 6$ in $\gssp$? Which tadpole graphs $T_{m,n}$, $m\geq 6$, are in $\gssp$?
\end{question}

 \begin{question}\label{q:leaf-to-leaf} 
 Let graph $H$ be obtained from a graph $G$ as $V(H)=V(G)\cup \{w\}$ and $E(G)\subseteq E(H)$. 
 
First note that the SSP of the graph $G$ does not imply the SSP of the graph $H$, for example graph $K_{1,3} \in \gssp$ but $K_{1,4}\notin \gssp$,  \cite[Theorem 4.3]{Lin20SSPgraph}. 
 
   We saw that in the case of trees, graphs of girth three, and the tadpole graphs $T_{4,n}$ and $T_{5,n}$, the lengths of the paths do not affect the SSP of the graph. 
     Is it true that for any $G\in \gssp$ having a leaf $v\in V(G)$, every graph $H$, obtained as $$V(H)=V(G)\cup \{w\} \text{ and } E(H)=E(G) \cup \{\{v,w\}\}$$
  is also in the set $\gssp$?
   \end{question}

\bibliographystyle{plain}
\bibliography{bibliography}
\end{document}